\newtheorem{theorem}{Theorem}[section]
\newtheorem{lemma}[theorem]{Lemma}
\newtheorem{corollary}[theorem]{Corollary}
\newtheorem{proposition}[theorem]{Proposition}
\newtheorem{definition}[theorem]{Definition}
\newtheorem{remark}{Remark}
\newcommand{\Ba}[1]{\begin{array}{#1}}
\newcommand{\Ea}{\end{array}}
\newcommand{\Be}{\begin{equation}}
\newcommand{\Ee}{\end{equation}}
\newcommand{\Bea}{\begin{eqnarray}}
\newcommand{\Eea}{\end{eqnarray}}
\newcommand{\Beas}{\begin{eqnarray*}}
\newcommand{\Eeas}{\end{eqnarray*}}
\begin{document}


\title{Carleson embeddings with loss for Bergman-Orlicz spaces of the unit ball}

\address{Beno\^it F. Sehba\\Department of Mathematics, University of Ghana,\\ P. O. Box LG 62 Legon, Accra,Ghana}
\email{bfsehba@ug.edu.gh} 
\subjclass{Primary 32A36; Secondary 32A10,46E30}
\keywords{Bergman space; Bergman metric; Orlicz space; Carleson measure.}

\maketitle

\begin{abstract}
We prove Carleson embeddings for Bergman-Orlicz spaces of the unit ball that extend the lower triangle estimates for the usual Bergman spaces.
\end{abstract}


\section{Introduction}
\setcounter{equation}{0} \setcounter{footnote}{0}
\setcounter{figure}{0} Our setting is the unit ball $\mathbb B^n$ of $\mathbb{C}^n$. We denote by $H(\mathbb{B}^n)$, the space of all holomorphic functions on $\mathbb{B}^n$. Let us denote by $d\nu$ the Lebesgue measure on $\mathbb B^n$.
 

A surjective function $\Phi:[0,\infty)\rightarrow [0,\infty)$ is a growth function, if it is continuous and non-decreasing. We note that this implies that $\Phi(0)=0$.

For $\alpha>-1$, we denote by $\nu_{\alpha}$ the normalized Lebesgue measure on $\mathbb{B}^n$ defined by $d\nu_{\alpha}(z)=c_{\alpha}(1-|z|^2)^{\alpha}d\nu(z)$, where $c_\alpha$ is the normalizing constant. Given a growth function $\Phi$, we denote by $L_\alpha^{\Phi}(\mathbb B^n)$ the space of all functions $f$ such that
$$||f||_{\Phi,\alpha}=||f||_{L_\alpha^{\Phi}}:=\int_{\mathbb B^n}\Phi(|f(z)|)d\nu_{\alpha}(z)<\infty.$$
The weighted Bergman-Orlicz space $A_\alpha^{\Phi}(\mathbb B^n)$ is the subspace of $L_\alpha^{\Phi}(\mathbb B^n)$ consisting of all holomorphic functions.
We define on $A_\alpha^{\Phi}(\mathbb B^n)$ the following Luxembourg (quasi)-norm
\begin{equation}\label{BergOrdef1}
||f||^{lux}_{\Phi,\alpha}=||f||^{lux}_{L_\alpha^{\Phi}}:=\inf\left\{\lambda>0: \int_{\mathbb B^n}\Phi\left(\frac{|f(z)|}{\lambda}\right)d\nu_{\alpha}(z)\le 1\right\}.
\end{equation}
We observe that $||f||^{lux}_{\Phi,\alpha}$ is finite if $f\in A_\alpha^{\Phi}(\mathbb B^n)$ (see \cite[Remark 1.4]{sehbastevic}). Moreover, $||f||^{lux}_{\Phi,\alpha}=0$ implies that $f=0$ (see \cite[Page 569]{sehbastevic}).


The usual weighted Bergman space $A_\alpha^{p}(\mathbb B^n)$ corresponds to $\Phi(t)=t^p$ and is defined as the set of all $f\in H(\mathbb{B}^n)$ such that
$$||f||_{p,\alpha}^p:= \int_{\mathbb B^n}|f(z)|^pd\nu_{\alpha}(z)<\infty.$$


 Given $0< p,q<\infty$, we consider the question of the characterization of the positive measures $\mu$ on $\mathbb{B}^n$ such that the embedding $I_\mu:A^p_\alpha(\mathbb{B}^n)\rightarrow L^q(\mathbb{B}^n, d\mu)$ is continuous. That is, there exists a constant $C>0$ such that the following inequality
$$\int_{\mathbb{B}^n}|f(z)|^qd\mu(z)\le C\|f\|_{p,\alpha}^q$$
holds for any $f\in A^p_\alpha(\mathbb{B}^n)$, $\alpha>-1$. In the setting of Bergman spaces of the unit disk, for $0<p\le q<\infty$, this question was answered by W. Hastings \cite{hastings} and the answer to the case $0<q<p<\infty$ (estimation with loss) was obtained by D. Luecking \cite{luecking1}. The extensions of these results to the unit ball are due to J. A. Cima and W. Wogen \cite{CW}  and D. Luecking \cite{luecking2,luecking3}. 
When this holds, we speak of Carleson embedding (or estimate) for $A^p_\alpha(\Omega)$; we also say that $\mu$ is a $q$-Carleson measure for $A^p_\alpha(\Omega)$.
\vskip .2cm
Let $\Phi$ be a growth function. For a function $f\in H(\mathbb{B}^n)$, we put 
$$\mathcal{A}_{\Phi,\mu}(f):=\inf\left\{\lambda>0:\,\int_{\mathbb{B}^n}\Phi\left(\frac{|f(z)|}{\lambda}\right)d\mu(z)\le 1\right\}.$$
In the setting of Bergman-Orlicz spaces, the definition of Carleson measures is as follows.
\begin{definition}
Let $\Phi_1$ and $\Phi_2$ be two growth functions, and let $\alpha>-1$. Let $\mu$ be a positive measure on $\mathbb{B}^n$.  
We say $\mu$ is a $\Phi_2$-Carleson measure for $A_\alpha^{\Phi_1}(\mathbb{B}^n)$, if there exists a constant $C>0$ such that for any $f\in A_\alpha^{\Phi_1}(\mathbb{B}^n)$,
\Be\label{eq:carlmeasdef}
\mathcal{A}_{\Phi_2,\mu}(f)\le C\|f\|_{\Phi_1,\alpha}.
\Ee
\end{definition}
\begin{remark}
We observe that (\ref{eq:carlmeasdef}) is equivalent to saying that there is a constant $C>0$ such that for any $f\in A_\alpha^{\Phi_1}(\mathbb{B}^n)$, $f\neq 0$,
\Be\label{eq:carlmeasdefequiv}
\int_{\mathbb{B}^n}\Phi_2\left(\frac{|f(z)|}{\|f\|_{\Phi_1,\alpha}}\right)d\mu(z)\le C.
\Ee
\end{remark}
Our concern in this paper is the characterization of the positive measures $\mu$ such that (\ref{eq:carlmeasdefequiv}) holds, when $\Phi_1$ and $\Phi_2$ are in some appropriate sub-classes of growth functions and such that $\Phi_2\circ\Phi_1^{-1}$ is non-increasing. This corresponds exactly to the case $0<q<p<\infty$ for the usual Carleson measures described above. The case where $\Phi_2\circ\Phi_1^{-1}$ is non-decreasing has been answered in \cite{Charpentier,Charpentiersehba,sehba}.
\vskip .2cm
Carleson measures are an important tool usually used in several problems in mathematical analysis and its applications. Among these problems, one has the questions of the boundedness of Composition operators and Toeplitz operators to name a few.

\section{Settings and presentation of the main result}
We recall that the growth function $\Phi$ is of upper type $q$ if we can find $q > 0$ and $C>0$ such that, for $s>0$ and $t\ge 1$,
\begin{equation}\label{uppertype}
 \Phi(st)\le Ct^q\Phi(s).\end{equation}
We denote by $\mathscr{U}^q$ the set of growth functions $\Phi$ of upper type $q$, (with $q\ge 1$), such that the function $t\mapsto \frac{\Phi(t)}{t}$ is non-decreasing. We write $$\mathscr{U}=\bigcup_{q\geq 1}\mathscr{U}^q.$$

We also recall that $\Phi$ is of lower type $p$ if we can find $p > 0$ and $C>0$ such that, for $s>0$ and $0<t\le 1$,
\begin{equation}\label{lowertype}
 \Phi(st)\le Ct^p\Phi(s).\end{equation}
We denote by $\mathscr{L}_p$ the set of growth functions $\Phi$ of lower type $p$,  (with $p\le 1$), such that the function $t\mapsto \frac{\Phi(t)}{t}$ is non-increasing. We write $$\mathscr{L}=\bigcup_{0<p\leq 1}\mathscr{L}_p.$$
Remark that we may always suppose that any $\Phi\in \mathscr{L}$ (resp. $\mathscr{U}$),  is concave (resp. convex) and
that $\Phi$ is a $\mathscr{C}^1$ function with derivative $\Phi'(t)\approx \frac{\Phi(t)}{t}$.
\vskip .2cm
The complementary function $\Psi$ of the growth function $\Phi$, is the function defined from $\mathbb R_+$ onto itself by
\begin{equation}\label{complementarydefinition}
\Psi(s)=\sup_{t\in\mathbb R_+}\{ts - \Phi(t)\}.
\end{equation}

The growth function $\Phi$ satisfies the $\Delta_2$-condition if there exists a constant $K>1$ such that, for any $t\ge 0$,
\begin{equation}\label{eq:delta2condition}
 \Phi(2t)\le K\Phi(t).\end{equation}

A growth function $\Phi$ is said to satisfy the $\nabla_2$-condition whenever both $\Phi$ and its complementary function satisfy the $\Delta_2$-conditon.

\vskip .2cm
For $z=(z_1,\ldots,z_n)$ and $w=(w_1,\ldots,w_n)$ in $\mathbb{C}^n$, we
let $${\langle z,w\rangle=z_1\overline {w_1} + \cdots + z_n\overline {w_n}}$$ which
gives $|z|^2=\langle z,z\rangle=|z_1|^2 +\cdots +|z_n|^2$.
 \vskip .1cm
 For $z\in \mathbb B^n$ and $\delta>0$ define the average function $$\hat {\mu}_\delta(z):=\frac{\mu(D(z,\delta))}{v_\alpha(D(z,\delta))}$$
 where $D(z,\delta)$ is the Bergman metric ball centered at $z$ with radius $\delta$.
 \vskip .2cm
 The Berezin transform $\tilde {\mu}$  of the measure $\mu$ is the function defined for any $w\in\mathbb{B}^n$ by $$\tilde {\mu}(w):=\int_{\mathbb{B}^n}\frac{(1-|w|^2)^{n+1+\alpha}}{|1-\langle z,w\rangle|^{2(n+1+\alpha)}}d\mu(z).$$

Our main result can be stated as follows.
\begin{theorem}\label{theo:main} Let $\Phi_1,\Phi_2\in \mathscr{L}\cup\mathscr{U}$, $\alpha>-1$.  Assume that 
\begin{itemize}
\item[(i)] $\Phi_1\circ\Phi_2^{-1}$ satisfies the $\nabla_2$-condition;
\item[(ii)] $\frac{\Phi_1\circ\Phi_2^{-1}(t)}{t}$ is non-decreasing.
\end{itemize}
Let $\mu$ be a positive measure on $\mathbb B^n$, and let  $\Phi_3$ be the complementary function of $\Phi_1\circ\Phi_2^{-1}$. Then the following assertions are equivalent.
 \begin{itemize}
 \item[(a)] There is a constant $C>0$ such that for any $f\in A_\alpha^{\Phi_1}(\mathbb{B}^n)$, $f\neq 0$, the inequality (\ref{eq:carlmeasdefequiv}) holds.
 \item[(b)] For any $0<\delta<1$, the average function $\hat {\mu}_\delta$ belongs to $L^{\Phi_3}(\mathbb B^n, d\nu_\alpha)$.
 \item[(c)] The Berezin transform $\tilde {\mu}$  of the measure $\mu$ belongs to $L^{\Phi_3}(\mathbb B^n, d\nu_\alpha)$.

 \end{itemize}
 \end{theorem}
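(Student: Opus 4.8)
The proof follows the pattern of Luecking's lower-triangle theorem, with the powers $t^{q/p}$ replaced by the complementary pair $(\Phi_1\circ\Phi_2^{-1},\Phi_3)$. Throughout I fix a separated $\delta$-lattice $\{a_k\}\subset\mathbb B^n$ giving a covering $\mathbb B^n=\bigcup_k D(a_k,\delta)$ of uniformly bounded overlap and I write $w_k:=\nu_\alpha(D(a_k,\delta))\approx(1-|a_k|^2)^{n+1+\alpha}$; I shall use freely that $\nu_\alpha(D(z,r))\approx(1-|z|^2)^{n+1+\alpha}$, that $1-|z|^2\approx 1-|w|^2$ and $|1-\langle z,\zeta\rangle|\approx|1-\langle w,\zeta\rangle|$ when $w\in D(z,r)$, and that by (i)--(ii) we may take $\Phi_1\circ\Phi_2^{-1}$ convex, so $\Phi_3$ is a genuine growth function and $\Phi_1\circ\Phi_2^{-1},\Phi_3\in\mathscr U\cap\nabla_2$ (in particular both are $\Delta_2$). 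The equivalence $(b)\Leftrightarrow(c)$ is the routine part: restricting the integral defining $\tilde\mu(z)$ to $D(z,\delta)$ gives $\tilde\mu(z)\gtrsim\hat\mu_\delta(z)$, so $(c)\Rightarrow(b)$ by monotonicity of $\Phi_3$ and $\Delta_2$; conversely, splitting that integral over the lattice yields $\tilde\mu(w)\lesssim\sum_k\frac{(1-|w|^2)^{n+1+\alpha}}{|1-\langle a_k,w\rangle|^{2(n+1+\alpha)}}\,\hat\mu_\delta(a_k)\,w_k$, a discretised Bergman--Forelli--Rudin operator applied to $(\hat\mu_\delta(a_k))_k$, which is bounded on $L^{\Phi_3}(\mathbb B^n,\nu_\alpha)$ because $\Phi_3\in\nabla_2$; discretising the Luxembourg quasinorm of $\hat\mu_\delta$ then gives $(b)\Rightarrow(c)$.

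For $(b)\Rightarrow(a)$: by the Remark it suffices, after the usual normalisation, to bound $\int_{\mathbb B^n}\Phi_2(|f|)\,d\mu$ for $f\in H(\mathbb B^n)$ with $\int_{\mathbb B^n}\Phi_1(|f|)\,d\nu_\alpha\le 1$. Decompose $\int\Phi_2(|f|)\,d\mu\le\sum_k\Phi_2\!\left(\sup_{D(a_k,\delta)}|f|\right)\mu(D(a_k,\delta))$; the sub-mean-value inequality for holomorphic functions together with a Jensen inequality valid up to a constant for a small enough exponent (here $\Phi_2\in\mathscr L\cup\mathscr U$ is used) gives $\Phi_2\!\left(\sup_{D(a_k,\delta)}|f|\right)\lesssim b_k$, with $b_k$ the $\nu_\alpha$-average of $\Phi_2(|f|)$ over a slightly dilated ball $D(a_k,\delta')$. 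Writing $\mu(D(a_k,\delta))=\hat\mu_\delta(a_k)w_k$ and applying Hölder's inequality in the weighted Orlicz sequence spaces attached to the complementary pair $(\Phi_1\circ\Phi_2^{-1},\Phi_3)$, one obtains $\int\Phi_2(|f|)\,d\mu\lesssim\big\|(b_k)_k\big\|_{L^{\Phi_1\circ\Phi_2^{-1}}(w)}\,\big\|(\hat\mu_\delta(a_k))_k\big\|_{L^{\Phi_3}(w)}$. The second factor is finite by (b) (discretise its quasinorm), while for the first, Jensen's inequality for the convex $\Phi_1\circ\Phi_2^{-1}$ gives $(\Phi_1\circ\Phi_2^{-1})(b_k)\le$ the $\nu_\alpha$-average of $\Phi_1(|f|)$ over $D(a_k,\delta')$, so $\sum_k(\Phi_1\circ\Phi_2^{-1})(b_k)w_k\lesssim\int\Phi_1(|f|)\,d\nu_\alpha\le 1$ by bounded overlap, and the $\Delta_2$-condition bounds the corresponding Luxembourg norm by an absolute constant. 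This gives $\int\Phi_2(|f|)\,d\mu\le C$.

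The converse $(a)\Rightarrow(b)$ is the heart of the matter. By a standard reduction (the precise radius is immaterial) it suffices to prove the discrete inequality $\sum_k\Phi_3(\hat\mu_\delta(a_k))w_k<\infty$, and by the duality between $L^{\Phi_3}(w)$ and $L^{\Phi_1\circ\Phi_2^{-1}}(w)$ (legitimate since $\Phi_1\circ\Phi_2^{-1}\in\Delta_2$) it is enough to show $\sum_k f_k\,\hat\mu_\delta(a_k)\,w_k\lesssim 1$ for every nonnegative sequence $(f_k)$ with $\sum_k(\Phi_1\circ\Phi_2^{-1})(f_k)w_k\le 1$. Given such $(f_k)$, put $b_k:=\Phi_2^{-1}(f_k)$, so $\sum_k\Phi_1(b_k)w_k\le 1$, and test (a) against the randomised functions $F_\omega(z):=\sum_k\varepsilon_k(\omega)\,b_k\,g_k(z)$, where $g_k(z):=\left(\frac{1-|a_k|^2}{1-\langle z,a_k\rangle}\right)^{M}$, $(\varepsilon_k)$ is a Rademacher sequence and $M$ is large enough that $|g_k|\approx 1$ on $D(a_k,\delta)$ and $\int_{\mathbb B^n}|g_k|^s\,d\nu_\alpha\approx w_k$ for the relevant exponents $s$. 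Two estimates are then required: an upper one, $\mathbb E_\omega\!\int_{\mathbb B^n}\Phi_1(|F_\omega|)\,d\nu_\alpha\lesssim\sum_k\Phi_1(b_k)w_k$, which for $\Phi_1\in\mathscr L$ comes directly from subadditivity of $\Phi_1$ and for $\Phi_1\in\mathscr U$ from the Khinchine inequality and the square-function (atomic-decomposition) estimate $\int_{\mathbb B^n}\Phi_1\!\left(\big(\sum_k b_k^2|g_k(z)|^2\big)^{1/2}\right)d\nu_\alpha(z)\lesssim\sum_k\Phi_1(b_k)w_k$; and a lower one, $\mathbb E_\omega\!\int_{\mathbb B^n}\Phi_2(|F_\omega|)\,d\mu\gtrsim\sum_k f_k\,\hat\mu_\delta(a_k)w_k$, which follows from Paley--Zygmund, since $\mathbb E_\omega\Phi_2(|F_\omega(z)|)\gtrsim\Phi_2(b_k)=f_k$ for $z\in D(a_k,\delta)$, summed over a splitting of the lattice into finitely many subfamilies with pairwise disjoint balls.

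To close the argument one transfers (a) into the scale-robust bound $\int_{\mathbb B^n}\Phi_2(|g|)\,d\mu\lesssim 1+\big(\int_{\mathbb B^n}\Phi_1(|g|)\,d\nu_\alpha\big)^{q_2}$ valid for all holomorphic $g$ ($q_2\ge 1$ an upper-type exponent of $\Phi_2$, with $q_2=1$ when $\Phi_2\in\mathscr L$), applies it to $g=F_\omega$, takes expectations, and --- after first rescaling $(f_k)$ so that $\sum_k(\Phi_1\circ\Phi_2^{-1})(f_k)w_k$ is small, and using Kahane's inequality when $\Phi_1\in\mathscr U$ --- controls $\mathbb E_\omega\big(\int\Phi_1(|F_\omega|)\,d\nu_\alpha\big)^{q_2}$ by the upper estimate, obtaining $\mathbb E_\omega\!\int\Phi_2(|F_\omega|)\,d\mu\lesssim 1$; with the lower estimate this yields $\sum_k f_k\,\hat\mu_\delta(a_k)w_k\lesssim 1$, hence (b). I expect the genuinely delicate points to be (1) the square-function / atomic-synthesis estimate for the non-power growth function $\Phi_1$ and the Khinchine--Kahane passage made compatible with its Orlicz modular, and (2) the book-keeping needed to absorb the non-homogeneity of the modular when transporting the pointwise inequality (a) to the random test functions; it is exactly here that hypothesis (i) (the $\nabla_2$-condition, underlying both the Orlicz duality and the modular/Luxembourg equivalence) and hypothesis (ii) (the convexity of $\Phi_1\circ\Phi_2^{-1}$, which makes Jensen's inequality applicable and the complementary function $\Phi_3$ meaningful) are indispensable.
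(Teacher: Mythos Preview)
Your proposal is correct and follows the Luecking blueprint; the treatment of $(b)\Leftrightarrow(c)$ and $(b)\Rightarrow(a)$ matches the paper's. For $(a)\Rightarrow(b)$, however, your route is noticeably more elaborate than the paper's. The paper proves an extended Khinchine inequality for growth functions in $\mathscr L\cup\mathscr U$ and uses only its \emph{lower} half on the $\mu$-side: starting from an arbitrary sequence $\{c_k\}$ with sequence modular at most~$1$, the atomic synthesis bound on $\int_{\mathbb B^n}\Phi_1(|f_t|)\,d\nu_\alpha$ is the same for every $t$ because $|c_kr_k(t)|=|c_k|$; hence $\|f_t\|_{\Phi_1,\alpha}^{lux}\le C$ uniformly in $t$, and (a) applies directly to each $f_t$, giving $\int\Phi_2(|f_t|)\,d\mu\le C$. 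Integrating in $t$ and applying the lower Khinchine inequality yields $\int\Phi_2\big((\sum_k|c_k|^2/|1-\langle z,a_k\rangle|^{2b})^{1/2}\big)\,d\mu\le C$, from which the pairing $\sum_k\Phi_2(|d_k|)\mu(D_k)$ is estimated and duality finishes. Your argument instead bounds $\mathbb E_\omega\int\Phi_1(|F_\omega|)\,d\nu_\alpha$ from above (needing the upper Khinchine inequality \emph{and} a square-function synthesis estimate on the $\nu_\alpha$-side), bounds $\mathbb E_\omega\int\Phi_2(|F_\omega|)\,d\mu$ from below via Paley--Zygmund, and then combines them through a scale-robust reformulation of (a) requiring Kahane's inequality and the modular-versus-Luxembourg bookkeeping you flag as delicate. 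That all works, and is arguably more robust in settings where the test functions are not literally built from a fixed sequence with unimodular random signs; but here the paper's observation that Rademacher signs leave the sequence modular invariant eliminates Paley--Zygmund, Kahane, and the scale-robust step entirely.
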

\vskip .1cm
We observe that the condition (ii) insures that the growth function $\Phi_1\circ\Phi_2^{-1}$ belongs to $\mathscr{U}$. If $\Phi_1(t)=t^p$ and $\Phi_2(t)=t^q$, then $\frac{\Phi_1\circ\Phi_2^{-1}(t)}{t}=t^{\frac pq-1}$ and for this to be non-decreasing, one should have $q\le p$. Hence the above result is an extension of the embedding $I_\mu: A_\alpha^p(\mathbb{B}^n)\rightarrow L^q(\mathbb{B}^n,d\mu)$ when $0<q<p<\infty$.
\vskip .2cm
As mentioned previously, the power function version of the above result is due to D. Luecking \cite{luecking}. In his proof, Luecking used a method that appeals to the atomic decomposition of Bergman spaces and Khintchine's inequalities. We will use the same approach here. We will prove and use a generalization of Khintchine's inequalities to growth functions, and take advantage of the recent characterization of the atomic decomposition of Bergman-Orlicz spaces obtained in \cite{BBT}. There are some other technicalities that require a good understanding of the properties of growth functions.
\vskip .2cm 
For $\phi:\mathbb{B}^n\rightarrow\mathbb{B}^n$ holomorphic, the composition operator $C_\phi$ is the operator defined for any $f\in H(\mathbb{B}^n)$ by $$C_\phi(f)(z)=(f\circ\phi)(z),\quad z\in \mathbb{B}^n.$$
We say the operator $C_\phi: A_\alpha^{\Phi_1}(\mathbb{B}^n)\rightarrow L^{\Phi_2}(\mathbb{B}^n,d\mu)$ is bounded, if there is a constant $K>0$ such that for any $f\in A_\alpha^{\Phi_1}(\mathbb{B}^n)$, $f\neq 0$, \Be\label{eq:compbound}\int_{\mathbb{B}^n}\Phi_2\left(\frac{|C_\phi(f)(z)|}{\|f\|_{\Phi_1,\alpha}}\right)d\mu(z)\le K.\Ee
The characterization of the symbols $\phi$ such that (\ref{eq:compbound}) holds, relies essentially  on the characterization of Carleson embeddings for Bergman-Orlicz spaces (see \cite{Charpentier,Charpentiersehba}). 
\vskip .1cm
Let $\phi$ be as above, and let $\beta>-1$. Define the measure $\mu_{\phi,\beta}$ by $$\mu_{\phi,\beta}(E)=\nu_\beta(\phi^{-1}(E))$$ for any Borel set $E\subseteq \mathbb{B}^n$.
\vskip .1cm
Then it follows from classical arguments (see for example \cite{Charpentiersehba}) and Theorem \ref{theo:main} that the following holds.
\begin{corollary}
Let $\Phi_1,\Phi_2\in \mathscr{L}\cup\mathscr{U}$, $\alpha,\beta>-1$.  Assume that 
\begin{itemize}
\item[(i)] $\Phi_1\circ\Phi_2^{-1}$ satisfies the $\nabla_2$-condition;
\item[(ii)] $\frac{\Phi_1\circ\Phi_2^{-1}(t)}{t}$ is non-decreasing.
\end{itemize}
Let  $\Phi_3$ be the complementary function of $\Phi_1\circ\Phi_2^{-1}$.
Then for any holomorphic self mapping $\phi$ of $\mathbb{B}^n$, the following conditions are equivalent.
\begin{itemize}
\item[(a)] $C_\phi$ is bounded from $A_\alpha^{\Phi_1}(\mathbb{B}^n)$ to $A_\beta^{\Phi_2}(\mathbb{B}^n)$.
\item[(b)] The measure $\mu_{\phi,\beta}$ is a $\Phi_2$-Carleson measure for $A_\alpha^{\Phi_1}(\mathbb{B}^n)$.
\item[(c)] For any $0<\delta<1$, the function $z\rightarrow \frac{\mu_{\phi,\beta}(D(z,\delta))}{\nu_\alpha(D(z,\delta))}$ belongs to $L^{\Phi_3}(\mathbb B^n, d\nu_\alpha)$.
\end{itemize}
\end{corollary}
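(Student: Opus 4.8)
The plan is to reduce the Corollary to Theorem~\ref{theo:main} via the classical transfer principle for image measures; no new hard analysis is needed, the substance being already contained in the theorem. Recall that $\mu_{\phi,\beta}$ is by construction the pushforward of the (finite) measure $\nu_\beta$ under the continuous map $\phi$, so that for every nonnegative Borel function $g$ on $\mathbb{B}^n$ one has
\[
\int_{\mathbb{B}^n} g(z)\,d\mu_{\phi,\beta}(z)=\int_{\mathbb{B}^n} g(\phi(w))\,d\nu_\beta(w).
\]

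To obtain $(a)\Leftrightarrow(b)$, I would fix $f\in A_\alpha^{\Phi_1}(\mathbb{B}^n)$ with $f\neq 0$, set $\lambda=\|f\|_{\Phi_1,\alpha}$, and apply the identity above to $g(z)=\Phi_2\big(|f(z)|/\lambda\big)$. Since then $g(\phi(w))=\Phi_2\big(|C_\phi(f)(w)|/\lambda\big)$, this yields
\[
\int_{\mathbb{B}^n}\Phi_2\left(\frac{|f(z)|}{\|f\|_{\Phi_1,\alpha}}\right)d\mu_{\phi,\beta}(z)=\int_{\mathbb{B}^n}\Phi_2\left(\frac{|C_\phi(f)(w)|}{\|f\|_{\Phi_1,\alpha}}\right)d\nu_\beta(w).
\]
By the Remark following the definition of Carleson measures, finiteness of the supremum over such $f$ of the left-hand side is precisely the statement that $\mu_{\phi,\beta}$ is a $\Phi_2$-Carleson measure for $A_\alpha^{\Phi_1}(\mathbb{B}^n)$, i.e. $(b)$; finiteness of the supremum of the right-hand side is exactly~\eqref{eq:compbound}, i.e. boundedness of $C_\phi$ from $A_\alpha^{\Phi_1}(\mathbb{B}^n)$ to $A_\beta^{\Phi_2}(\mathbb{B}^n)$, i.e. $(a)$. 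Hence $(a)$ and $(b)$ are equivalent, with the same constant.

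The equivalence $(b)\Leftrightarrow(c)$ is then just Theorem~\ref{theo:main} applied to the positive measure $\mu=\mu_{\phi,\beta}$: hypotheses (i), (ii) and the definition of $\Phi_3$ are inherited verbatim, assertion $(a)$ of the theorem is $(b)$ of the Corollary (via the same Remark), and assertion $(b)$ of the theorem --- that $\hat{\mu}_\delta\in L^{\Phi_3}(\mathbb{B}^n,d\nu_\alpha)$ for every $0<\delta<1$ --- reads, with $\mu=\mu_{\phi,\beta}$, exactly as $(c)$ of the Corollary once one notes $\hat{\mu}_\delta(z)=\mu_{\phi,\beta}(D(z,\delta))/\nu_\alpha(D(z,\delta))$. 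I do not anticipate any real obstacle: the only things to check carefully are the measurability of the integrands (immediate, $\phi$ being holomorphic) and that the Luxemburg-type functional $\mathcal{A}_{\Phi_2,\mu}$ appearing in the definitions agrees with the integral normalizations used in~\eqref{eq:carlmeasdefequiv} and~\eqref{eq:compbound}, which is routine. All the analytic content sits in Theorem~\ref{theo:main}.
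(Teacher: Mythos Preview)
Your proposal is correct and follows exactly the route the paper indicates: the paper does not give a detailed proof of the Corollary but simply states that it ``follows from classical arguments (see for example \cite{Charpentiersehba}) and Theorem \ref{theo:main}'', which is precisely your change-of-variables identity for the pushforward measure $\mu_{\phi,\beta}$ (giving $(a)\Leftrightarrow(b)$) followed by an application of the main theorem to $\mu=\mu_{\phi,\beta}$ (giving $(b)\Leftrightarrow(c)$).
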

\vskip .2cm
The paper is organized as follows. In the next section, we present some useful tools needed to prove our result. Section 3 is devoted to the proof of our result . 

\vskip .2cm
As usual, given two positive quantities $A$ and $B$, the notation $A\lesssim B$ (resp. $A\gtrsim B$) means that there is an absolute
positive constant $C$ such that $A\le CB$ (resp. $A\ge CB$). When $A\lesssim B$ and $B\lesssim A$, we write $A\approx B$ and say $A$
and $B$ are equivalent. Finally, all over the text,  $C$ or $K$ will
denote a positive constants not necessarily the same at distinct
occurrences.

\section{Preliminary results}

In this section, we give some fundamental facts about growth functions, Bergman metric and Bergman-Orlicz spaces. These results are needed in the proof our result.
\subsection{Growth functions and H\"older-type inequality}
Let $\Phi$ be a $\mathcal C^1$ growth function. Then the lower and the upper indices of $\Phi$ are respectively defined by
$$a_\Phi:=\inf_{t>0}\frac{t\Phi^\prime(t)}{\Phi(t)}\,\,\,\textrm{and}\,\,\,b_\Phi:=\sup_{t>0}\frac{t\Phi^\prime(t)}{\Phi(t)}.$$
It is well known that if $\Phi$ is convex, then $1\le a_\Phi\le b_\Phi<\infty$ and, if $\Phi$ is concave, then $0<a_\Phi\le b_\Phi\le 1$. We observe that a convex growth function satisfies the $\bigtriangledown_2-$condition if and only if $1< a_\Phi\le b_\Phi<\infty$ (see \cite[Lemma 2.6]{DHZZ}). 
\vskip .2cm
We also observe that if $\Phi$ is a $\mathcal C^1$ growth function,  then the functions $\frac{\Phi(t)}{t^{a_\Phi}}$ and $\frac{\Phi^{-1}(t)}{t^{\frac{1}{b_\Phi}}}$ are increasing. We then deduce the following.

\begin{lemma}\label{lem:phip}
Let $\Phi\in \mathscr{L}_p$. Then the growth function $\Phi_p$, defined by
$\Phi_p(t)=\Phi(t^{1/p})$ is in $\mathscr{U}^q$ for some $q\geq 1$. 
\end{lemma}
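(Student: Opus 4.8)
The plan is to verify directly that $\Phi_p(t)=\Phi(t^{1/p})$ is a growth function satisfying the two defining requirements of $\mathscr{U}^q$: upper type $q$ for a suitable $q\geq 1$, and that $t\mapsto \Phi_p(t)/t$ is non-decreasing. The first step is to note that $\Phi_p$ is indeed a growth function: it is a composition of the continuous non-decreasing surjection $\Phi$ with the continuous increasing bijection $t\mapsto t^{1/p}$ of $[0,\infty)$ onto itself, hence continuous, non-decreasing and surjective, with $\Phi_p(0)=0$.

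Next I would pin down the correct value of $q$ using the index characterization recalled just before the lemma. Since $\Phi\in\mathscr{L}_p$ with $p\le 1$, we have $0<a_\Phi\le b_\Phi\le 1$, and the stated fact that $t\mapsto \Phi(t)/t^{a_\Phi}$ is increasing while $t\mapsto\Phi^{-1}(t)/t^{1/b_\Phi}$ is increasing controls the growth of $\Phi$ from both sides. Concretely, for $t\ge 1$ and $s>0$ one gets $\Phi(st)\le t^{b_\Phi}\Phi(s)$ (comparing $\Phi(st)/(st)^{b_\Phi}$ with $\Phi(s)/s^{b_\Phi}$ via the monotonicity of $\Phi(\cdot)/(\cdot)^{b_\Phi}$, which follows from the companion statement for $\Phi^{-1}$ — this needs a short argument, but it is exactly the classical pair of index inequalities). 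Substituting $s\mapsto s^{1/p}$, $t\mapsto t^{1/p}$ gives, for $t\ge 1$,
\[
\Phi_p(st)=\Phi\bigl((st)^{1/p}\bigr)\le t^{b_\Phi/p}\,\Phi\bigl(s^{1/p}\bigr)=t^{b_\Phi/p}\,\Phi_p(s),
\]
so $\Phi_p$ has upper type $q:=b_\Phi/p$. Since $b_\Phi\ge a_\Phi$ and, more to the point, one should check $q\ge 1$: this is where the hypothesis $p\le 1$ combined with a lower bound on $b_\Phi$ enters. Actually the cleanest route is to observe that $t\mapsto\Phi(t)/t$ being non-increasing (definition of $\mathscr{L}$) forces $b_\Phi\le 1$, while $\Phi\in\mathscr{L}_p$ being of lower type $p$ forces $a_\Phi\ge p$; but for $q=b_\Phi/p\ge 1$ I want $b_\Phi\ge p$, which holds since $b_\Phi\ge a_\Phi\ge p$.

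For the second requirement, that $t\mapsto\Phi_p(t)/t$ is non-decreasing: write $\Phi_p(t)/t=\Phi(t^{1/p})/t=\bigl(\Phi(t^{1/p})/t^{1/p}\bigr)\cdot t^{1/p-1}$. Here $\Phi(s)/s$ is non-increasing in $s$ by the assumption $\Phi\in\mathscr{L}$, and $s=t^{1/p}$ is increasing in $t$, so the first factor is non-increasing in $t$; the second factor $t^{1/p-1}$ is non-decreasing since $p\le 1$. This is a product of a non-increasing and a non-decreasing factor, so monotonicity is not immediate from the factorization alone — and this is the main obstacle. I would instead argue via derivatives using the normalization (recalled in the excerpt) that we may take $\Phi\in\mathscr{C}^1$ with $\Phi'(s)\approx\Phi(s)/s$: then $\frac{d}{dt}\bigl(\Phi_p(t)/t\bigr)$ has the sign of $\tfrac1p t^{1/p-1}\Phi'(t^{1/p})\cdot t-\Phi(t^{1/p})=\tfrac1p t^{1/p}\Phi'(t^{1/p})-\Phi(t^{1/p})$, and since $s\Phi'(s)/\Phi(s)\ge a_\Phi\ge p$ with $s=t^{1/p}$, we get $\tfrac1p s\Phi'(s)\ge \Phi(s)$, so the derivative is $\ge 0$. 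Alternatively, and perhaps more transparently, one can invoke Lemma-level facts about indices directly: $\Phi_p\in\mathscr{C}^1$ and $\frac{t\Phi_p'(t)}{\Phi_p(t)}=\frac1p\cdot\frac{s\Phi'(s)}{\Phi(s)}\big|_{s=t^{1/p}}$, whence $a_{\Phi_p}=a_\Phi/p\ge 1$ and $b_{\Phi_p}=b_\Phi/p$; then $a_{\Phi_p}\ge 1$ gives both that $\Phi_p$ is (equivalent to) a convex function with $\Phi_p(t)/t$ non-decreasing and that its upper type is $b_{\Phi_p}<\infty$, so $\Phi_p\in\mathscr{U}^q$ with $q=b_\Phi/p$. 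I expect the bookkeeping around the constant in the upper-type inequality (carrying the implicit constant from $\Phi'(t)\approx\Phi(t)/t$ through the substitution) to be the only genuinely fussy point.
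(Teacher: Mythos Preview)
Your proposal is correct and follows essentially the same route as the paper: the paper does not give a detailed proof but simply says the lemma is ``deduced'' from the observation that $\Phi(t)/t^{a_\Phi}$ and $\Phi^{-1}(t)/t^{1/b_\Phi}$ are increasing, and your argument is exactly an unpacking of this via the index identities $a_{\Phi_p}=a_\Phi/p\ge 1$ and $b_{\Phi_p}=b_\Phi/p$. The only comment is that your initial factorization attempt for $\Phi_p(t)/t$ is, as you yourself note, inconclusive, so the clean write-up should go straight to the index/derivative computation.
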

\vskip .1cm
When writing $\Phi\in \mathscr{U}^q$ (resp. $\Phi\in \mathscr{L}_p$), we will always assume that $q$ (resp. $p$) is the smallest (resp. biggest) number $q_1$ (resp. $p_1$) such that $\Phi$ is of upper type $q_1$ (resp. lower type $p_1$). We note that $a_\Phi$ (resp. $b_\Phi$) coincides with the biggest (resp. smallest) number $p$ such that $\Phi$ is of lower (resp. upper) type $p$.
\vskip .1cm
We also make the following observation (see \cite[Proposition 2.1]{sehbatchoundja}).
\begin{proposition}\label{phiandinverse}
The following assertion holds:
\begin{center}
    $\Phi\in \mathscr{L}$ if and only if $\Phi^{-1}\in \mathscr{U}.$
\end{center}
\end{proposition}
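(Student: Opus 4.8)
The plan is to prove both implications simultaneously by translating the membership conditions $\Phi\in\mathscr L$ and $\Phi^{-1}\in\mathscr U$ into statements about the indices $a_\Phi,b_\Phi$ and about the monotonicity of the auxiliary functions $t\mapsto\Phi(t)/t$ and $t\mapsto\Phi^{-1}(t)/t$, and then exploit the inverse relation between these two families of functions. First I would recall from the excerpt that, after the harmless normalizations, we may take $\Phi$ to be a $\mathcal C^1$ growth function with $\Phi'(t)\approx\Phi(t)/t$, so that the indices are well defined, and that membership in $\mathscr L$ (resp. $\mathscr U$) is equivalent to: the function $t\mapsto\Phi(t)/t$ is non-increasing (resp. $t\mapsto\Phi(t)/t$ is non-decreasing) together with $\Phi$ being of lower type $p\le 1$ for some $p>0$ (resp. upper type $q\ge 1$ for some $q\ge 1$). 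The key algebraic observation is that if $\psi=\Phi^{-1}$, then for $s=\Phi(t)$ one has $\psi(s)/s=t/\Phi(t)=\bigl(\Phi(t)/t\bigr)^{-1}$; hence $s\mapsto\psi(s)/s$ is non-decreasing exactly when $t\mapsto\Phi(t)/t$ is non-increasing, because $\Phi$ is a non-decreasing bijection of $[0,\infty)$. This single identity already swaps the two monotonicity conditions.

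Next I would handle the type conditions. The cleanest route is through the indices: it is recorded in the excerpt that $a_\Phi$ is the largest $p$ for which $\Phi$ is of lower type $p$ and $b_\Phi$ the smallest $q$ for which $\Phi$ is of upper type $q$, and that the functions $\Phi(t)/t^{a_\Phi}$ and $\Phi^{-1}(t)/t^{1/b_\Phi}$ are increasing. Differentiating (or using the defining infimum/supremum) one checks the reciprocal relations $a_{\Phi^{-1}}=1/b_\Phi$ and $b_{\Phi^{-1}}=1/a_\Phi$; indeed, with $\psi=\Phi^{-1}$ and $s=\Phi(t)$, the quantity $s\psi'(s)/\psi(s)$ equals $\bigl(t\Phi'(t)/\Phi(t)\bigr)^{-1}$, so taking $\inf$ and $\sup$ over $t>0$ gives the two identities. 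Consequently $\Phi$ is of lower type $p\le 1$ with the extra monotonicity (i.e. $\Phi\in\mathscr L$, equivalently $0<a_\Phi\le b_\Phi\le 1$) if and only if $1\le a_{\Phi^{-1}}\le b_{\Phi^{-1}}<\infty$, which is exactly the characterization of $\Phi^{-1}\in\mathscr U$. Combining this with the monotonicity swap from the previous paragraph yields the equivalence in both directions; and since $(\Phi^{-1})^{-1}=\Phi$, proving one implication automatically delivers the other.

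I expect the main obstacle to be bookkeeping around the endpoint/regularity hypotheses rather than anything deep: one must make sure that the reduction to a $\mathcal C^1$ representative with $\Phi'(t)\approx\Phi(t)/t$ is legitimate for $\Phi^{-1}$ as well (it is, since $\mathcal C^1$-ness and that derivative estimate are inherited by the inverse via the inverse function theorem and the identity $(\Phi^{-1})'(s)=1/\Phi'(\Phi^{-1}(s))\approx \Phi^{-1}(s)/s$), and that the index identities $a_{\Phi^{-1}}=1/b_\Phi$, $b_{\Phi^{-1}}=1/a_\Phi$ hold with the correct strict versus non-strict inequalities so that the boundary cases $a_\Phi=b_\Phi=1$ (where $\Phi$ lies in both $\mathscr L$ and $\mathscr U$ and corresponds to $\Phi(t)\approx t$) are treated consistently. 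Once those routine verifications are in place, the proof is just the chain of equivalences above, and I would present it as a short string of ``if and only if'' statements rather than as two separate arguments.
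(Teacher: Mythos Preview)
Your argument is correct. The paper itself does not supply a proof of this proposition; it simply records it as an observation and refers to \cite[Proposition~2.1]{sehbatchoundja} for the justification. Your approach---swapping the monotonicity of $t\mapsto\Phi(t)/t$ and $s\mapsto\Phi^{-1}(s)/s$ via the identity $\Phi^{-1}(s)/s=\bigl(\Phi(t)/t\bigr)^{-1}$ at $s=\Phi(t)$, together with the index relations $a_{\Phi^{-1}}=1/b_\Phi$ and $b_{\Phi^{-1}}=1/a_\Phi$ obtained from $s\psi'(s)/\psi(s)=\bigl(t\Phi'(t)/\Phi(t)\bigr)^{-1}$---is the natural one and fills in precisely what the paper omits. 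The bookkeeping you flag (passing to a $\mathcal C^1$ representative, handling the borderline case $a_\Phi=b_\Phi=1$) is routine and does not hide any real difficulty.
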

We recall the following H\"older-type inequality (see \cite[Page 58]{raoren}).
\begin{lemma}\label{lem:holdergenecompl}
Let $\Phi\in \mathscr{U}$, $\alpha>-1$. Denote by $\Psi$ the complementary function of $\Phi$. Then
$$\int_{\mathbb{B}^n}|f(z)g(z)|d\nu_\alpha(z)\le 2\left(\int_{\mathbb{B}^n}\Phi\left(|f(z)|\right)d\nu_\alpha(z)\right)\left(\int_{\mathbb{B}^n}\Psi\left(|g(z)|\right)d\nu_\alpha(z)\right).$$
\end{lemma}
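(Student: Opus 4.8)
The plan is to deduce this Hölder-type inequality directly from the Young-type inequality that defines the complementary function $\Psi$ of $\Phi$, combined with a normalization argument. Recall that for all $s,t\geq 0$ we have the Young inequality $st\leq \Phi(t)+\Psi(s)$, which is immediate from the definition $\Psi(s)=\sup_{t\geq 0}\{st-\Phi(t)\}$. The subtlety in the statement is that the right-hand side involves the \emph{product} of the two integrals rather than their sum, and that there is no normalization of $f$ or $g$ to ensure the integrals are at most $1$; so the first thing I would do is dispose of the trivial cases and then rescale.

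First I would handle degenerate cases: if either $\int_{\mathbb{B}^n}\Phi(|f|)\,d\nu_\alpha$ or $\int_{\mathbb{B}^n}\Psi(|g|)\,d\nu_\alpha$ is zero or infinite, the inequality holds trivially (when one factor is zero, the corresponding function vanishes $\nu_\alpha$-a.e. since $\Phi,\Psi$ are growth functions vanishing only at $0$... actually one must be a little careful here, but in the zero case the left side is $0$; in the infinite case the right side is $+\infty$). So assume both integrals, call them $A=\int\Phi(|f|)\,d\nu_\alpha$ and $B=\int\Psi(|g|)\,d\nu_\alpha$, are finite and positive. Now set $F=f/A$ and $G=g/B$. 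Since $\Phi\in\mathscr{U}$ is convex with $\Phi(0)=0$, for $A\geq 1$ we have $\Phi(|f|/A)\le \Phi(|f|)/A$ by convexity (the map $t\mapsto \Phi(t)/t$ is non-decreasing, so dividing the argument by $A\ge 1$ divides $\Phi$ by at least $A$); if $A<1$ one uses instead that $t\mapsto\Phi(t)/t$ is non-decreasing to get $\Phi(|f|)\le \frac{1}{A}\Phi(A|f|)$... the cleanest route is: by convexity of $\Phi$ and $\Phi(0)=0$, $\Phi(|f|/A)\le \frac{1}{A}\Phi(|f|)$ whenever $A\ge 1$. To avoid splitting into $A\ge 1$ and $A<1$, I would instead replace $A$ and $B$ by $\max(A,1)$ and $\max(B,1)$; this only enlarges the right-hand side, and now $\int\Phi(|F|)\,d\nu_\alpha\le 1$ and $\int\Psi(|G|)\,d\nu_\alpha\le 1$.

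With the normalization in place, apply Young's inequality pointwise to $s=|G(z)|$, $t=|F(z)|$:
\begin{equation*}
|F(z)G(z)|\le \Phi(|F(z)|)+\Psi(|G(z)|),\qquad z\in\mathbb{B}^n.
\end{equation*}
Integrating against $d\nu_\alpha$ gives
\begin{equation*}
\int_{\mathbb{B}^n}|F(z)G(z)|\,d\nu_\alpha(z)\le \int_{\mathbb{B}^n}\Phi(|F(z)|)\,d\nu_\alpha(z)+\int_{\mathbb{B}^n}\Psi(|G(z)|)\,d\nu_\alpha(z)\le 2.
\end{equation*}
Undoing the normalization, $\int_{\mathbb{B}^n}|f(z)g(z)|\,d\nu_\alpha(z)\le 2\max(A,1)\max(B,1)$, and since $\max(A,1)\max(B,1)\le (1+A)(1+B)$ this is slightly weaker than claimed; to get exactly the stated bound $2AB$ one must be a touch more careful — e.g., argue that we may assume $A=B=1$ after rescaling $f\mapsto f/A^{1/?}$... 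Actually the honest statement one proves this way is $\int|fg|\,d\nu_\alpha\le 2\|f\|_\Phi^{lux}\|g\|_\Psi^{lux}$ with Luxemburg norms, and the inequality as literally written in the lemma (with the raw integrals) is the form recorded in Rao–Ren; I would simply cite \cite[Page 58]{raoren} for the precise constant and present the Young-inequality computation above as the argument. The main obstacle is purely bookkeeping: matching the constant $2$ and the product form $AB$ exactly, which hinges on the homogeneity properties of the Luxemburg functional rather than on any deep fact; everything substantive is just the pointwise Young inequality together with convexity of $\Phi$.
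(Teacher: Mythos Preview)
The paper offers no proof of this lemma; it merely cites \cite[Page~58]{raoren}. Your route via the pointwise Young inequality $st\le\Phi(t)+\Psi(s)$ followed by normalization is the standard one, and it correctly yields $\int_{\mathbb{B}^n}|fg|\,d\nu_\alpha\le 2\|f\|_{\Phi}^{lux}\|g\|_{\Psi}^{lux}$, which is what Rao--Ren actually prove. You are right that this does not give the modular-product form stated in the lemma, and in fact that form is false in general: with $\Phi(t)=t^2$ (so $\Psi(s)=s^2/4$) and $f=g\equiv c$ on the probability space $(\mathbb{B}^n,\nu_\alpha)$, the left side equals $c^2$ while the right side equals $c^4/2$, and the inequality fails for every $0<c<\sqrt{2}$.

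So the gap you detect is a misstatement in the lemma, not a defect in your argument; no amount of extra bookkeeping will recover the constant as written. It does not harm the paper's application: when the lemma is invoked in the proof of the main theorem, the first factor is $\Phi_2\bigl(|f|/\|f\|_{\Phi_1,\alpha}^{lux}\bigr)$, whose $(\Phi_1\circ\Phi_2^{-1})$-modular is at most~$1$ by definition of the Luxembourg norm, hence whose Luxembourg norm is at most~$1$; the correct Luxembourg-norm H\"older inequality then delivers exactly the bound the proof needs. Your write-up would be complete if you state and prove the Luxembourg-norm version and remark that this is what is actually required downstream.
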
 

\subsection{The Bergman metric and atomic decomposition}
For $a\in \mathbb B^n$, $a\ne 0$, let $\varphi_a$ denote
the automorphism of $\mathbb B^n$ taking $0$ to $a$ defined
by
$$\varphi_{a}(z)=\frac{a - P_{a}(z) - (1 - |z|^{2})^{\frac{1}{2}}Q_{a}(z)}{1 -
 \langle z,a\rangle}$$ where $P_a$ is the projection of $\mathbb C^n$ onto the
 one-dimensional subspace span of $a$ and $Q_a=I - P_a$ where
 $I$ is the identity.
 It is easy to see that
 $$\varphi_a(0)=a,\,\,\,
 \varphi_a(a)=0,\,\,\, \varphi_a o \varphi_a(z)=z.$$
 The Bergman metric $d$ on the unit ball $\mathbb{B}^n$ is defined by $$d(z,w)=\frac 12\log\left(\frac{1+\varphi_z(w)}{1-\varphi_z(w)}\right),\,z,w\in \mathbb{B}^n.$$

For $\delta>0$, we denote by $$D(z,\delta)=\{w\in \mathbb {B}^n: d(z,w)<\delta\},$$ the Bergman ball centered at $z$ with radius $\delta.$ It is well known that for any $\alpha>-1$, and for $w\in D(z,\delta)$, $$\nu_\alpha(D(z,\delta))\approx |1-\langle z,w\rangle|^{n+1+\alpha}\approx(1-|w|^2)^{n+1+\alpha}.$$
We recall that a sequence $a=\{a_k\}_{k\in \mathbb{N}}$ in $\mathbb{B}^n$ is said to be separated in the Bergman metric $d$, if there is a positive constant $r>0$ such that $$d(a_k,a_j)>r\,\,\,\textrm{for}\,\,\,k\neq j.$$
We refer to \cite[Theorem 2.23]{Zhu} for the following result.

\begin{theorem}\label{thm:covering}
Given $\delta\in (0, 1)$, there exists a sequence $\{a_k\}$ of points  of $\mathbb{B}^n$ called $\delta$-lattice such that

\begin{itemize}
\item[(i)] the balls $D(a_k,\frac{\delta}{4})$ are pairwise disjoint;
\item[(ii)]$\mathbb{B}^n=\cup_{k}D(a_k,\delta)$;
\item[(iii)] there is an integer N (depending only on $\mathbb{B}^n$) such
that each point of $\mathbb{B}^n$ belongs to at most N of the balls $D(a_k,4\delta)$.
\end{itemize}
\end{theorem}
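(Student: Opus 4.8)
The plan is to realize $\{a_k\}$ as a maximal $(\delta/2)$-separated sequence in the Bergman metric and then check the three properties in turn; only the finite-overlap property (iii) is substantial. Since $(\mathbb{B}^n,d)$ is a separable metric space, a standard Zorn's lemma argument (or a greedy construction over a countable dense set) produces a set $\{a_k\}$ that is maximal among those satisfying $d(a_j,a_k)\ge \delta/2$ for all $j\ne k$; separability forces this family to be countable, so we may index it by $\mathbb{N}$. Here maximality means that no further point of $\mathbb{B}^n$ can be adjoined without destroying the $(\delta/2)$-separation.

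Properties (i) and (ii) are then purely formal consequences of the triangle inequality. For (i), if some $w$ lay in $D(a_j,\delta/4)\cap D(a_k,\delta/4)$ with $j\ne k$, then $d(a_j,a_k)\le d(a_j,w)+d(w,a_k)<\delta/2$, contradicting separation; hence the balls $D(a_k,\delta/4)$ are pairwise disjoint. For (ii), given $z\in\mathbb{B}^n$, maximality forces $d(z,a_k)<\delta/2$ for some $k$ (otherwise $z$ could be added), so $z\in D(a_k,\delta/2)\subseteq D(a_k,\delta)$ and therefore $\mathbb{B}^n=\bigcup_k D(a_k,\delta)$.

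The core of the argument is (iii), and here I would exploit the Möbius invariance of the Bergman metric together with the invariant measure $d\lambda(z)=(1-|z|^2)^{-(n+1)}\,d\nu(z)$. The decisive fact is that $d(\varphi_a(z),\varphi_a(w))=d(z,w)$ for the automorphisms $\varphi_a$ and that $\lambda$ is invariant under these maps; consequently $\varphi_z$ carries $D(z,\rho)$ onto $D(0,\rho)$ while preserving $\lambda$, so that $\lambda(D(z,\rho))=\lambda(D(0,\rho))=:V_\rho$ is a finite positive constant independent of the center. Suppose now that a point $z$ belongs to $M$ of the balls $D(a_{k_1},4\delta),\dots,D(a_{k_M},4\delta)$, so that $d(z,a_{k_i})<4\delta$ for each $i$. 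By the triangle inequality the disjoint balls $D(a_{k_i},\delta/4)$ all lie inside $D(z,17\delta/4)$, and summing their equal masses gives
\[
M\,V_{\delta/4}=\sum_{i=1}^{M}\lambda\!\left(D(a_{k_i},\delta/4)\right)\le \lambda\!\left(D(z,17\delta/4)\right)=V_{17\delta/4},
\]
whence $M\le V_{17\delta/4}/V_{\delta/4}$, the required overlap bound.

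Finally, to see that the bound may be taken independent of $\delta\in(0,1)$, I would note that $\delta\mapsto V_{17\delta/4}/V_{\delta/4}$ is continuous and bounded on $(0,1)$: as $\delta\to0$ it tends to $17^{2n}$, since $V_\rho\approx c_n\rho^{2n}$ for small $\rho$ (the real dimension of $\mathbb{B}^n$ being $2n$), and it stays finite as $\delta\to1$ because the radii remain bounded; its supremum yields an integer $N$ depending only on $n$. The main obstacle is precisely this step (iii): properties (i) and (ii) are formal, whereas the overlap estimate rests on the homogeneity of ball volumes, itself a consequence of the joint Möbius invariance of $d$ and $\lambda$. A variant avoiding $\lambda$ would instead invoke the stated comparison $\nu_\alpha(D(z,\delta))\approx(1-|z|^2)^{n+1+\alpha}$ together with $1-|a_{k_i}|^2\approx1-|z|^2$ whenever $d(z,a_{k_i})<4\delta$, but the invariant-measure computation is cleaner because every ball of a given radius then carries exactly the same mass.
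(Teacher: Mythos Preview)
Your argument is correct and is essentially the standard construction via a maximal $(\delta/2)$-separated set together with the volume-comparison estimate in the invariant measure, which is exactly how the result is proved in Zhu's book. The paper itself does not give a proof of this statement: it simply cites \cite[Theorem~2.23]{Zhu}, so there is no in-paper argument to compare against beyond that reference.
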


\vskip .2cm
Let $\alpha>-1$ and $\Phi$ a growth function. For a fixed $\delta$-lattice $a=\{a_k\}_{k\in \mathbb{N}}$ in $\mathbb{B}^n$, we define by $l_{a,\alpha}^{\Phi}$, the space of all complex sequences $c=\{c_k\}_{k\in \mathbb{N}}$ such that
$$\sum_k(1-|a_k|^2)^{n+1+\alpha}\Phi(|c_k|)<\infty.$$
The following was observed in \cite[Proposition 2.8]{TZ}.
\begin{lemma}\label{lem:dualtityseq} Let $\Phi\in \mathscr{U}$, and $\alpha>-1$. Assume that $\Phi$ satisfies the $\nabla_2$-condition and denote by $\Psi$ its complementary function. Then, the dual space $\left(\ell_{a,\alpha}^{\Phi}\right)^*$ of the space
$\ell_{a,\alpha}^{\Phi}$ identifies with $\ell_{a,\alpha}^{\Psi}$
under the sum pairing $$\langle c,d\rangle_{a,\alpha}=
\sum_{k}(1-|a_k|^2)^{n+1+\alpha}c_k\overline {d}_k,$$ where $c= \{c_{k}\}_{k\in \mathbb{N}}\in l_{a,\alpha}^{\Phi}$, $d =\{d_{k}\}_{k\in \mathbb{N}}\in l_{a,\alpha}^{\Psi}$ for $a=\{a_k\}_{k\in \mathbb{N}}$ a fixed $\delta$-lattice in $\mathbb{B}^n$.
\end{lemma}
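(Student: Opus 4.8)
The plan is to recognize $\ell_{a,\alpha}^{\Phi}$ as the Orlicz space associated with a discrete measure and then to run the classical argument identifying the dual of an Orlicz space under the $\Delta_2$-condition. Set $w_k=(1-|a_k|^2)^{n+1+\alpha}$ and let $\sigma$ be the measure on $\mathbb{N}$ defined by $\sigma(\{k\})=w_k$. Since each $w_k$ is finite and positive (recall $\alpha>-1$), $\sigma$ is $\sigma$-finite, and by definition $c=\{c_k\}$ lies in $\ell_{a,\alpha}^{\Phi}$ exactly when the modular $\sum_k w_k\Phi(|c_k|)=\int_{\mathbb{N}}\Phi(|c|)\,d\sigma$ is finite. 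Because $\Phi$ satisfies the $\nabla_2$-condition, both $\Phi$ and $\Psi$ lie in $\Delta_2$; in particular finiteness of the modular is equivalent to finiteness of the Luxemburg norm $\|c\|_{\Phi}^{lux}=\inf\{\lambda>0:\sum_k w_k\Phi(|c_k|/\lambda)\le 1\}$, so $\ell_{a,\alpha}^{\Phi}$ is the Banach space $L^{\Phi}(\mathbb{N},\sigma)$ for this norm and the finitely supported sequences are dense in it (this density is exactly where $\Phi\in\Delta_2$ is used). One could at this point simply invoke the general Orlicz-space duality theorem for $\sigma$-finite measures, but I would give the short hands-on argument in the discrete setting.

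Next I would dispatch the easy inclusion $\ell_{a,\alpha}^{\Psi}\hookrightarrow(\ell_{a,\alpha}^{\Phi})^*$. Given $d\in\ell_{a,\alpha}^{\Psi}$, the H\"older-type inequality of Lemma \ref{lem:holdergenecompl}, applied on the discrete measure space $(\mathbb{N},\sigma)$, shows that the pairing $\langle c,d\rangle_{a,\alpha}=\int_{\mathbb{N}}c\overline{d}\,d\sigma=\sum_k w_k c_k\overline{d_k}$ converges absolutely and satisfies $|\langle c,d\rangle_{a,\alpha}|\lesssim\|c\|_{\Phi}^{lux}\,\|d\|_{\Psi}^{lux}$. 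Hence $c\mapsto\langle c,d\rangle_{a,\alpha}$ is a bounded functional on $\ell_{a,\alpha}^{\Phi}$, and the map $d\mapsto\langle\,\cdot\,,d\rangle_{a,\alpha}$ is an injective bounded map into $(\ell_{a,\alpha}^{\Phi})^*$.

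The substance is the reverse inclusion. Given $T\in(\ell_{a,\alpha}^{\Phi})^*$, let $e_j$ denote the sequence with a $1$ in the $j$-th slot and zeros elsewhere, and define $d_j:=\overline{T(e_j)}/w_j$. By linearity $T(c)=\sum_k w_k c_k\overline{d_k}=\langle c,d\rangle_{a,\alpha}$ for every finitely supported $c$, so once $d=\{d_k\}$ is shown to belong to $\ell_{a,\alpha}^{\Psi}$ the representation extends to all of $\ell_{a,\alpha}^{\Phi}$ by the density established above together with the continuity of $T$ and of the pairing. To see $d\in\ell_{a,\alpha}^{\Psi}$, I would use that the dual Orlicz norm is recovered as a supremum of the pairing: for every finitely supported $c$ with $\|c\|_{\Phi}^{lux}\le 1$ one has $|\langle c,d\rangle_{a,\alpha}|=|T(c)|\le\|T\|$, and taking the supremum over all such $c$ (truncating $d$ to finite support and letting the support exhaust $\mathbb{N}$, with monotone convergence) bounds the Orlicz norm of $d$ by $\|T\|$. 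Since $\Psi\in\Delta_2$, finiteness of the Orlicz norm forces $\sum_k w_k\Psi(|d_k|)<\infty$, i.e.\ $d\in\ell_{a,\alpha}^{\Psi}$, completing the identification.

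I expect the main obstacle to be precisely this last step: extracting the representing sequence and proving it has finite $\Psi$-modular. The delicate points are the truncation and monotone-convergence passage needed to compute the dual Orlicz norm of an infinitely supported $d$ as a supremum of pairings against finitely supported $c$, and the careful bookkeeping between the Luxemburg norm, the Orlicz norm and the modular, all of which rely on invoking the $\nabla_2$-condition (both $\Delta_2$'s) at the right moments. The remaining ingredients---$\sigma$-finiteness, H\"older, and density of finitely supported sequences---are routine.
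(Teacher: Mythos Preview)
The paper does not prove this lemma at all; it merely quotes it with the attribution ``The following was observed in \cite[Proposition 2.8]{TZ}.'' Your proposal is therefore strictly more than what the paper offers, and it is correct: identifying $\ell_{a,\alpha}^{\Phi}$ with $L^{\Phi}(\mathbb{N},\sigma)$ for the discrete weighted measure $\sigma(\{k\})=(1-|a_k|^2)^{n+1+\alpha}$ and then running the standard Orlicz-duality argument (H\"older for one inclusion; coordinate functionals, density of finitely supported sequences under $\Delta_2$, and the Orlicz-norm-as-supremum formula for the other) is exactly how such results are proved in the cited literature. The truncation/monotone-convergence step you flag as delicate is routine in the discrete case.

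One small citation mismatch: you invoke Lemma~\ref{lem:holdergenecompl}, but in the paper that lemma is stated only on $(\mathbb{B}^n,d\nu_\alpha)$ and with the \emph{product of modulars} on the right-hand side, not the product of Luxemburg norms. What you actually use is the standard H\"older inequality $|\langle c,d\rangle_{a,\alpha}|\le 2\|c\|_{\Phi}^{lux}\|d\|_{\Psi}^{lux}$ for Orlicz spaces over an arbitrary $\sigma$-finite measure space; this is of course in \cite{raoren} as well, so the fix is just to cite the correct statement rather than the paper's specialized version.
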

For $\Phi$ a growth function, define \Be\label{eq:pPhi} p_\Phi= \left\{\begin{array}{lcr}1 & \mbox{if} & \Phi\in \mathscr{U}\\ p & \mbox{if} & \Phi\in \mathscr{L}_p.\end{array}\right.\Ee
We refer to \cite[Proposition 3.1 and Proposition 3.2]{BBT} for the following result.
\begin{proposition}\label{prop:atomicdecomp}
Let $\Phi\in \mathscr{L}\cup\mathscr{U}$, $\alpha>-1$ and $b>\frac{n+1+\alpha}{p_\Phi}$. Let $a=\{a_k\}_{k\in \mathbb{N}}$ be a $\delta$-separated sequence in $\mathbb{B}^n$. Then for any sequence $c=\{c_k\}_{k\in\mathbb{N}}$ of complex numbers that satisfies the condition $$\sum_k(1-|a_k|^2)^{n+1+\alpha}\Phi\left(\frac{|c_k|}{(1-|a_k|^2)^b}\right)<\infty,$$
the series $\sum_{k=1}^\infty\frac{c_k}{(1-\langle z,a_k\rangle)^b}$ converges in
 $A_\alpha^{\Phi}(\mathbb{B}^n)$  to a function $f$ and 
\Be\label{eq:atomicdecomp}
\int_{\mathbb{B}^n}\Phi\left(\left|\sum_{k=1}^\infty\frac{c_k}{(1-\langle z,a_k\rangle)^b}\right|\right)d\nu_\alpha(z)\lesssim \sum_k(1-|a_k|^2)^{n+1+\alpha}\Phi\left(\frac{|c_k|}{(1-|a_k|^2)^b}\right).
\Ee
\end{proposition}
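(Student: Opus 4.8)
The goal is to establish, for a $\delta$-separated sequence $a=\{a_k\}$ and the prescribed admissibility condition on $\{c_k\}$, both the convergence of the atomic series $f(z)=\sum_{k}c_k(1-\langle z,a_k\rangle)^{-b}$ in $A_\alpha^{\Phi}(\mathbb{B}^n)$ and the norm bound \eqref{eq:atomicdecomp}. The essential analytic input is the standard integral estimate: for $b>\frac{n+1+\alpha}{p_\Phi}$ and appropriate exponents, $\int_{\mathbb{B}^n}(1-|z|^2)^{\alpha}|1-\langle z,a\rangle|^{-c}\,d\nu(z)\lesssim (1-|a|^2)^{n+1+\alpha-c}$ whenever $c>n+1+\alpha$ (Zhu's lemma). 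The plan is to split the argument according to whether $\Phi\in\mathscr{U}$ (convex, upper type $q\ge 1$) or $\Phi\in\mathscr{L}_p$ (concave, lower type $p\le 1$), since the two regimes call for genuinely different convexity manoeuvres.

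\textbf{The convex case $\Phi\in\mathscr{U}^q$.}
First I would reduce to a normalized setting: by the very definition of the admissibility hypothesis, set $M:=\sum_k(1-|a_k|^2)^{n+1+\alpha}\Phi\!\left(|c_k|(1-|a_k|^2)^{-b}\right)$, which is finite, and introduce the probability-type weights $\omega_k:=(1-|a_k|^2)^{n+1+\alpha}\Phi\!\left(|c_k|(1-|a_k|^2)^{-b}\right)/M$. The idea is to write $|1-\langle z,a_k\rangle|^{-b}$ against a splitting that lets Jensen's inequality for the convex $\Phi$ move $\Phi$ inside the sum. Concretely, one chooses an auxiliary exponent $b=b_1+b_2$ and a normalizing factor so that $\sum_k \theta_k(z)=1$ for a pointwise partition of unity $\theta_k(z)\approx (1-|a_k|^2)^{s}|1-\langle z,a_k\rangle|^{-s}$ with $s>n+1+\alpha$; applying Jensen to $\Phi\!\left(\sum_k \theta_k(z)\,u_k(z)\right)\le \sum_k\theta_k(z)\Phi(u_k(z))$ with $u_k(z)=|c_k|/\bigl(\theta_k(z)(1-\langle z,a_k\rangle)^{b}\bigr)$, and then integrating in $z$ and invoking the Forelli--Rudin estimate on each resulting term, collapses the double integral to $\sum_k(1-|a_k|^2)^{n+1+\alpha}\Phi(\cdots)$. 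The upper-type-$q$ hypothesis is exactly what controls the factor $\theta_k(z)^{-1}$ that Jensen produces, via \eqref{uppertype}, turning it into an integrable power. Convergence of the series in $A_\alpha^\Phi$ then follows by applying the same estimate to tails $\sum_{k\ge N}$ and using $M<\infty$.

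\textbf{The concave case $\Phi\in\mathscr{L}_p$ and the main obstacle.}
Here Jensen runs the wrong way, so the strategy is to transfer to the convex world via Lemma \ref{lem:phip}: the function $\Phi_p(t)=\Phi(t^{1/p})$ lies in $\mathscr{U}^q$. I would raise the series to the power $p$ using $\bigl|\sum_k c_k(1-\langle z,a_k\rangle)^{-b}\bigr|^{p}\le \sum_k |c_k|^{p}|1-\langle z,a_k\rangle|^{-bp}$ (valid since $0<p\le 1$), then apply the convex-case machinery to $\Phi_p$ with the exponent $bp$, for which the threshold $bp>n+1+\alpha$ is precisely the hypothesis $b>\frac{n+1+\alpha}{p_\Phi}=\frac{n+1+\alpha}{p}$. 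Writing $\Phi(|f(z)|)=\Phi_p(|f(z)|^{p})$ and inserting the subadditivity bound lets the finite-overlap and Forelli--Rudin estimates close the argument, reproducing $\sum_k(1-|a_k|^2)^{n+1+\alpha}\Phi(|c_k|(1-|a_k|^2)^{-b})$ on the right. The delicate point, and the step I expect to be the main obstacle, is the careful bookkeeping of exponents so that in \emph{both} regimes the single hypothesis $b>\frac{n+1+\alpha}{p_\Phi}$ simultaneously guarantees (a) the strict inequality $c>n+1+\alpha$ needed for every application of the integral estimate and (b) that the weights produced by Jensen (convex case) or by the power inequality (concave case) remain summable after integration; aligning these thresholds, rather than any single estimate, is where the proof requires genuine care.
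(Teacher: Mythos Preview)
The paper does not supply its own proof of this proposition: it merely states the result and refers the reader to \cite[Proposition 3.1 and Proposition 3.2]{BBT}. So there is nothing in the present paper to compare your argument against line by line.

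That said, your plan is the standard one and is essentially what is carried out in the cited reference. A couple of remarks. In the convex case you make the route sound more intricate than it needs to be: since $p_\Phi=1$ gives $b>n+1+\alpha>n$, the single choice $\theta_k(z)=(1-|a_k|^2)^{b}/|1-\langle z,a_k\rangle|^{b}$ already satisfies $\sum_k\theta_k(z)\le C_0$ by separation; no splitting $b=b_1+b_2$ is required. Jensen plus the upper-type bound $\Phi(C_0 t)\lesssim C_0^{q}\Phi(t)$ then gives $\Phi(|f|)\lesssim\sum_k\theta_k(z)\,\Phi(|c_k|(1-|a_k|^2)^{-b})$, and the Forelli--Rudin estimate with $b>n+1+\alpha$ closes the integral exactly to the right-hand side of \eqref{eq:atomicdecomp}. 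Your concave-case reduction via $|\sum|^p\le\sum|\cdot|^p$ and Lemma~\ref{lem:phip} is correct, and the threshold check is immediate: $b>\frac{n+1+\alpha}{p}$ is precisely $bp>n+1+\alpha$, which is the hypothesis needed to run the convex argument for $\Phi_p$ with exponent $bp$. The ``main obstacle'' you flag---aligning the exponent thresholds across both regimes---is in fact automatic once the definition of $p_\Phi$ in \eqref{eq:pPhi} is unpacked, so this should not be presented as the crux.
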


Let us close this subsection with the following estimate.
\begin{lemma}\label{lem:meanvalue}
Let $r>0$, $\Phi\in \mathscr{L}\cup\mathscr{U}$ and $\alpha>-1$. Then there exists a constant $K>0$ such that for any $f\in H(\mathbb{B}^n)$,
\begin{equation}\label{eq:meanvalue}
\Phi(|f(z)|)\le K\int_{D(z,r)}\Phi(|f(\zeta)|)\frac{d\nu(w)}{(1-|w|^2)^{n+1}}.
\end{equation}
\end{lemma}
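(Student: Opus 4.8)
The estimate \eqref{eq:meanvalue} is a sub-mean-value property for $\Phi(|f|)$. The plan is to reduce it to the classical sub-mean-value inequality for $|f|^{p_\Phi}$, where $p_\Phi$ is as in \eqref{eq:pPhi}, and then transfer the exponent through $\Phi$ using the type conditions.

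First I would treat the two cases according to whether $\Phi\in\mathscr{U}$ or $\Phi\in\mathscr{L}_p$. If $\Phi\in\mathscr{U}$, then $\Phi$ may be assumed convex, so $\Phi(|f|)$ is subharmonic (as a convex increasing function of the subharmonic function $|f|$, or directly because $|f|^{a_\Phi}$ is subharmonic and $\Phi(t)\approx$ an increasing convex function of $t^{a_\Phi}$ with $a_\Phi\ge 1$); the classical sub-mean-value inequality on the Bergman ball $D(z,r)$ then gives \eqref{eq:meanvalue} directly, since $(1-|w|^2)^{n+1}$ is comparable to a constant on $D(z,r)$ and absorbing that constant into $K$. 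If $\Phi\in\mathscr{L}_p$ with $0<p\le 1$, I would instead write, for $\zeta\in D(z,r)$,
\[
|f(\zeta)|^p=\left(|f(\zeta)|^p\right),
\]
and recall that $g:=f^{p}$ need not be holomorphic, but $|f|^p$ is still subharmonic on $\mathbb{B}^n$ (any nonnegative power of the modulus of a holomorphic function is subharmonic). Hence the sub-mean-value inequality yields
\[
|f(z)|^{p}\lesssim \int_{D(z,r)}|f(w)|^{p}\,\frac{d\nu(w)}{(1-|w|^2)^{n+1}},
\]
again using that the Bergman volume of $D(z,r)$ is comparable to $(1-|z|^2)^{n+1}\approx(1-|w|^2)^{n+1}$ for $w\in D(z,r)$.

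Next I would pass from this power inequality to the $\Phi$-version. Applying Jensen's inequality to the concave function $t\mapsto \Phi(t^{1/p})$ is not available directly, so instead I would use Lemma~\ref{lem:phip}: the function $\Phi_p(t)=\Phi(t^{1/p})$ belongs to $\mathscr{U}^q$ for some $q\ge 1$, hence is (comparable to) a convex growth function, and in particular is subadditive-up-to-constants and satisfies a Jensen-type inequality on probability measures. Writing $d\lambda_z(w)$ for the normalized restriction of $(1-|w|^2)^{-(n+1)}d\nu(w)$ to $D(z,r)$ (a probability measure), the power estimate reads $|f(z)|^p\lesssim \int |f|^p\,d\lambda_z$, and then monotonicity of $\Phi_p$ together with Jensen applied to the convex $\Phi_p$ gives
\[
\Phi(|f(z)|)=\Phi_p\!\left(|f(z)|^p\right)\lesssim \Phi_p\!\left(\int_{D(z,r)}|f(w)|^p\,d\lambda_z(w)\right)\lesssim \int_{D(z,r)}\Phi_p\!\left(|f(w)|^p\right)d\lambda_z(w),
\]
which, after unwinding $\Phi_p(|f|^p)=\Phi(|f|)$ and restoring the comparability $d\lambda_z\approx (1-|w|^2)^{-(n+1)}d\nu$ with volume $\nu(D(z,r))\approx(1-|w|^2)^{n+1}$, is exactly \eqref{eq:meanvalue}. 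The constant $K$ depends only on $r$, $n$, $\alpha$ and the type constants of $\Phi$.

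\textbf{Main obstacle.} The delicate point is handling the non-convex case $\Phi\in\mathscr{L}$ cleanly: one cannot apply Jensen directly to $\Phi$ (it is concave, which goes the wrong way) nor to $t\mapsto\Phi(t^{1/p})$ without first knowing the latter is essentially convex. The trick that makes it work is Lemma~\ref{lem:phip}, which converts the lower-type/concavity of $\Phi$ into upper-type/convexity of $\Phi_p$; one should be careful that the equivalences "$\Phi$ of upper type $q$" $\Rightarrow$ "$\Phi$ comparable to a convex function" and the attendant Jensen estimate only hold up to multiplicative constants, but since \eqref{eq:meanvalue} is itself only an inequality with an unspecified constant $K$, this is harmless. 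A secondary, routine point is justifying the sub-mean-value inequality for $|f|^p$ with $0<p\le 1$ and extracting the correct power of $(1-|w|^2)$; this is standard and follows from the comparability of Bergman balls with Euclidean balls of radius $\approx(1-|z|^2)$ and the known estimate $\nu_0(D(z,r))\approx(1-|z|^2)^{n+1}$.
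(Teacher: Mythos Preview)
Your proposal is correct and follows essentially the same route as the paper: obtain the sub-mean-value inequality for $|f|^{p_\Phi}$ on Bergman balls (the paper cites \cite[Lemma~2.24]{Zhu}), then apply Jensen's inequality using Lemma~\ref{lem:phip}, which guarantees that $\Phi_{p_\Phi}(t)=\Phi(t^{1/p_\Phi})\in\mathscr{U}$ is (comparable to) a convex function. The only cosmetic difference is that the paper treats $\Phi\in\mathscr{U}$ and $\Phi\in\mathscr{L}_p$ uniformly via the single parameter $p_\Phi$, while you split into two cases.
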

\begin{proof}
For $\Phi$ a growth function, define $p_\Phi$ as in (\ref{eq:pPhi}). Then by \cite[Lemma 2.24]{Zhu}, there exists a constant $C>0$ such that
\Beas
|f(z)|^{p_\Phi}&\le& \frac{C}{(1-|z|)^{n+1+\alpha}}\int_{D(z,r)}|f(\zeta)|^{p_\Phi}d\nu_\alpha(w)\\ &\lesssim& \int_{D(z,r)}|f(\zeta)|^{p_\Phi}\frac{d\nu_\alpha(w)}{\nu_\alpha(D(z,r))}.
\Eeas
Observing that $\Phi_p(t)=\Phi(t^{\frac{1}{p_\Phi}})$ is in $\mathscr{U}$, we obtain applying Jensen's inequality to the above estimate that
\Beas
\Phi(|f(z)|)&\le& C \int_{D(z,r)}\Phi(|f(\zeta)|)\frac{d\nu_\alpha(w)}{\nu_\alpha(D(z,r))}\\ &\le& K\int_{D(z,r)}\Phi(|f(\zeta)|)\frac{d\nu(w)}{(1-|w|^2)^{n+1}}.
\Eeas

\end{proof}

\subsection{Averaging functions and Berezin transform}
Let $\mu$ be a positive measure on $\mathbb{B}^n$ and $\alpha>-1$. 
For $w\in \mathbb{B}^n$, the normalized reproducing kernel at $w$ is given by
\begin{equation*}\label{eq:normalrepkern} k_{\alpha,w}(z)=\frac{(1-|w|^2)^{\frac{n+1+\alpha}{2}}}{(1-\langle z,w\rangle)^{n+1+\alpha}}.
\end{equation*}
The Berezin transform $\tilde {\mu}$  of the measure $\mu$ is the function defined for any $w\in\mathbb{B}^n$ by $$\tilde {\mu}(w):=\int_{\mathbb{B}^n}|k_{\alpha,w}(z,w)|^2d\mu(z).$$
When $d\mu(z)=f(z)d\nu_\alpha(z)$, we write $\tilde {\mu}=\tilde {f}$ and speak of the Berezin transform of the function $f$.
\vskip .2cm
It is well known that the Berezin transform ($f\mapsto \tilde{f}$) is bounded on $L_\alpha^p(\mathbb{B}^n)$ if and only if $p>1$ (see \cite{Zhao1}). It follows from this and the interpolation result \cite[Theorem 4.3]{DHZZ} that the following holds.
\begin{lemma}\label{lem:berezinbound}
Let $\alpha>-1$. Then the Berezin transform is bounded on $L_\alpha^{\Phi}(\mathbb{B}^n)$ for any $\Phi\in \mathscr{U}$ that satisfies the $\nabla_2$-condition.
\end{lemma}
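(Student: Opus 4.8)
The plan is to obtain the Orlicz estimate from the ordinary $L^{p}$-estimates by interpolation. Since $\Phi\in\mathscr{U}$ satisfies the $\nabla_{2}$-condition, the remark recalled after the definition of the indices (together with \cite[Lemma 2.6]{DHZZ}) gives $1<a_{\Phi}\le b_{\Phi}<\infty$. I would then fix exponents $p$ and $q$ with $1<p<a_{\Phi}\le b_{\Phi}<q<\infty$. After the standard normalization of $\Phi$ (convex, $\mathcal{C}^{1}$ with $\Phi'(t)\approx\Phi(t)/t$) the functions $t\mapsto\Phi(t)/t^{p}$ and $t\mapsto t^{q}/\Phi(t)$ are non-decreasing, so $\Phi$ lies strictly between the power functions $t^{p}$ and $t^{q}$ in the precise sense required by the interpolation theorem \cite[Theorem 4.3]{DHZZ}; consequently $L_{\alpha}^{\Phi}(\mathbb{B}^{n})$ is an interpolation space for the couple $(L_{\alpha}^{p}(\mathbb{B}^{n}),L_{\alpha}^{q}(\mathbb{B}^{n}))$, and every linear operator bounded on both endpoints is bounded on $L_{\alpha}^{\Phi}(\mathbb{B}^{n})$.

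It then remains to supply the two endpoint estimates. The Berezin transform $f\mapsto\tilde{f}$ is a positive linear operator, and by \cite{Zhao1} it is bounded on $L_{\alpha}^{s}(\mathbb{B}^{n})$ for every $s>1$; in particular it is bounded on $L_{\alpha}^{p}(\mathbb{B}^{n})$ and on $L_{\alpha}^{q}(\mathbb{B}^{n})$ for the $p,q$ chosen above. Combining this with the interpolation statement of the previous paragraph yields the boundedness of $f\mapsto\tilde{f}$ on $L_{\alpha}^{\Phi}(\mathbb{B}^{n})$, which is exactly the assertion of the lemma.

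The one delicate point — and the main obstacle — is to verify that the hypotheses of \cite[Theorem 4.3]{DHZZ} are genuinely met for the chosen $p,q$: one needs the strict inequalities $1<a_{\Phi}$ and $b_{\Phi}<\infty$ (which is precisely where the $\nabla_{2}$-condition enters) and the compatibility of $\Phi$ with the upper and lower type exponents $p$ and $q$. Once these are checked the argument is immediate, and the resulting norm bound depends only on $\alpha$, on $p,q$, and on the type constants of $\Phi$.
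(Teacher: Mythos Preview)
Your proposal is correct and follows essentially the same approach as the paper: the paper simply observes that the Berezin transform is bounded on $L_\alpha^p(\mathbb{B}^n)$ for every $p>1$ (citing \cite{Zhao1}) and then invokes the interpolation result \cite[Theorem 4.3]{DHZZ}. You have merely spelled out explicitly why the $\nabla_2$-condition places $\Phi$ strictly between two power functions so that the interpolation theorem applies, which is exactly the content the paper leaves implicit.
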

For $z\in \mathbb{B}^n$ and $\delta\in (0,1)$, we define the average of the positive measure $\mu$ at $z$ by $$\hat {\mu}_\delta(z)=\frac{\mu(D(z,\delta))}{\nu_\alpha(D(z,\delta))}.$$
The function $\hat {\mu}$ is very useful in the characterization of Carleson embeddings with loss and some other operators (see \cite{luecking1,Zhu2,Zhu3} and the references therein).

The following lemma follows as in the power functions case \cite[Proposition 3.6]{Choe} (see also \cite[Lemma 2.9]{nanasehba}).
\begin{lemma}\label{lem:variationoflattice}
Let $\Phi\in \mathscr{L}\cup\mathscr{U}$, $\alpha>-1$, and $r,s\in (0,1)$. Assume that $\mu$ is a positive Borel measure on $\mathbb{B}^n$. Then
the following assertions are equivalent.
\begin{itemize}
\item[(i)] The function  $\mathbb{B}^n\ni z\mapsto \frac{\mu(D(z,r))}{\nu_\alpha(D(z,r))}$ belongs to $L_\alpha^{\Phi}(\mathbb{B}^n)$.
\item[(ii)] The function  $\mathbb{B}^n\ni z\mapsto \frac{\mu(D(z,s))}{\nu_\alpha(D(z,s))}$ belongs to $L_\alpha^{\Phi}(\mathbb{B}^n)$.\\
\end{itemize}
\end{lemma}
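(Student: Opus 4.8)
The plan is to reduce each of the two implications to ball‑counting estimates in the Bergman metric, in the spirit of the power‑function argument of \cite{Choe}. We may assume $0<r<s<1$, the case $r=s$ being trivial. For the implication (ii)$\Rightarrow$(i) only a pointwise comparison is needed: since $D(z,r)\subseteq D(z,s)$ and, for the fixed radii $r$ and $s$, $\nu_\alpha(D(z,r))\approx(1-|z|^2)^{n+1+\alpha}\approx\nu_\alpha(D(z,s))$, we get $\hat{\mu}_r(z)\le C\hat{\mu}_s(z)$ for all $z\in\mathbb{B}^n$, with $C=C(r,s,n,\alpha)\ge 1$. Since $\Phi$ is nondecreasing and satisfies $\Phi(Ct)\lesssim\Phi(t)$ for every $C\ge1$ (using the upper type when $\Phi\in\mathscr{U}^q$, and the fact that $t\mapsto\Phi(t)/t$ is nonincreasing when $\Phi\in\mathscr{L}$), we obtain $\Phi(\hat{\mu}_r(z))\le\Phi(C\hat{\mu}_s(z))\lesssim\Phi(\hat{\mu}_s(z))$; integrating against $d\nu_\alpha$ gives $\hat{\mu}_r\in L_\alpha^{\Phi}(\mathbb{B}^n)$.

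For (i)$\Rightarrow$(ii) the idea is to discretize. Fix $\delta_0=r/2$ and, using Theorem \ref{thm:covering}, a $\delta_0$-lattice $\{a_k\}$, so that the balls $D(a_k,\delta_0/4)$ are pairwise disjoint and $\bigcup_k D(a_k,\delta_0)=\mathbb{B}^n$. The first step is the covering estimate
\[\hat{\mu}_s(z)\ \lesssim\ \sum_{k\in S_z}\hat{\mu}_{\delta_0}(a_k),\qquad S_z:=\{k:\ a_k\in D(z,s+\delta_0)\}.\]
Indeed $D(z,s)\subseteq\bigcup_k D(a_k,\delta_0)$, and any $a_k$ with $D(a_k,\delta_0)\cap D(z,s)\neq\emptyset$ lies in $D(z,s+\delta_0)$, so $\mu(D(z,s))\le\sum_{k\in S_z}\mu(D(a_k,\delta_0))$; dividing by $\nu_\alpha(D(z,s))$ and using $\nu_\alpha(D(a_k,\delta_0))\approx(1-|z|^2)^{n+1+\alpha}\approx\nu_\alpha(D(z,s))$ for $k\in S_z$ gives the claim. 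Moreover the disjoint balls $D(a_k,\delta_0/4)$ with $k\in S_z$ all lie in $D(z,s+2\delta_0)$ and each has $\nu_\alpha$-measure $\approx(1-|z|^2)^{n+1+\alpha}$, so $S_z$ has at most $M=M(r,s,n,\alpha)$ elements.

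Next I would push $\Phi$ through the sum and integrate. Using $|S_z|\le M$, the monotonicity of $\Phi$, and $\Phi(Ct)\lesssim\Phi(t)$, $\Phi(\hat{\mu}_s(z))\le\Phi\bigl(CM\max_{k\in S_z}\hat{\mu}_{\delta_0}(a_k)\bigr)\le\sum_{k\in S_z}\Phi\bigl(CM\,\hat{\mu}_{\delta_0}(a_k)\bigr)\lesssim\sum_{k\in S_z}\Phi(\hat{\mu}_{\delta_0}(a_k))$. Since $a_k\in D(z,s+\delta_0)$ is equivalent to $z\in D(a_k,s+\delta_0)$, Tonelli's theorem gives $\int_{\mathbb{B}^n}\Phi(\hat{\mu}_s(z))\,d\nu_\alpha(z)\lesssim\sum_k\Phi(\hat{\mu}_{\delta_0}(a_k))\,\nu_\alpha(D(a_k,s+\delta_0))\approx\sum_k(1-|a_k|^2)^{n+1+\alpha}\Phi(\hat{\mu}_{\delta_0}(a_k))$. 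Finally I would bound this lattice sum by $\int_{\mathbb{B}^n}\Phi(\hat{\mu}_r)\,d\nu_\alpha$: for $z\in D(a_k,\delta_0/4)$ one has $D(a_k,r-\delta_0/4)\subseteq D(z,r)$ and $\delta_0<r-\delta_0/4$, whence $\mu(D(a_k,\delta_0))\le\mu(D(z,r))$, and since $\nu_\alpha(D(a_k,\delta_0))\approx(1-|a_k|^2)^{n+1+\alpha}\approx\nu_\alpha(D(z,r))$ this yields $\hat{\mu}_{\delta_0}(a_k)\lesssim\hat{\mu}_r(z)$ for every such $z$. Therefore $(1-|a_k|^2)^{n+1+\alpha}\Phi(\hat{\mu}_{\delta_0}(a_k))\approx\int_{D(a_k,\delta_0/4)}\Phi(\hat{\mu}_{\delta_0}(a_k))\,d\nu_\alpha(z)\lesssim\int_{D(a_k,\delta_0/4)}\Phi(\hat{\mu}_r(z))\,d\nu_\alpha(z)$, and summing over $k$ (the $D(a_k,\delta_0/4)$ being disjoint) gives $\sum_k(1-|a_k|^2)^{n+1+\alpha}\Phi(\hat{\mu}_{\delta_0}(a_k))\lesssim\int_{\mathbb{B}^n}\Phi(\hat{\mu}_r(z))\,d\nu_\alpha(z)<\infty$. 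Chaining these estimates proves $\hat{\mu}_s\in L_\alpha^{\Phi}(\mathbb{B}^n)$.

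The part I expect to be the main obstacle is the radius bookkeeping rather than any single inequality: each time a Bergman ball centered at $a_k$ is compared with one centered at a nearby point $z$, the radius must be slightly inflated or deflated, and the lattice has to be chosen fine enough (this is what forces $\delta_0<\tfrac45 r$, and $\delta_0=r/2$ works) so that after all these adjustments one returns precisely to the radius $r$ of hypothesis (i). One must also check that $\Phi\in\mathscr{L}\cup\mathscr{U}$ is exactly what is needed to absorb the multiplicative constants when passing $\Phi$ through finite sums and dilations; the resulting implied constants depend on $r,s,n,\alpha$ and on the type of $\Phi$, but not on $\mu$ or on $z$.
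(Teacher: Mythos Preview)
Your argument is correct and is precisely the adaptation of the power-function proof of \cite[Proposition~3.6]{Choe} (and \cite[Lemma~2.9]{nanasehba}) that the paper invokes without writing out; the only new ingredient beyond the $L^p$ case is the observation that $\Phi(Ct)\lesssim_C\Phi(t)$ for $C\ge1$ when $\Phi\in\mathscr{L}\cup\mathscr{U}$, which you justify correctly. The radius bookkeeping (in particular the choice $\delta_0=r/2<\tfrac45 r$ so that $D(a_k,\delta_0)\subseteq D(z,r)$ for $z\in D(a_k,\delta_0/4)$) is exactly what is needed, and nothing more is required.
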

The following is an elementary exercise (see \cite[Page 16]{Zhu3}).
\begin{lemma}\label{lem:Berezinsupaverage}
Let $\delta\in (0,1)$. Then there exists a constant $C_\delta>0$ such that
$$\hat {\mu}_\delta(z)\le C_\delta\tilde {\mu}(z),\,\,\,\textrm{for any}\,\,\,z\in \mathbb{B}^n.$$
\end{lemma}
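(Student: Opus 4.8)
The plan is to bound the Berezin transform $\tilde{\mu}$ from below by retaining, in the integral that defines it, only the contribution of the Bergman ball $D(z,\delta)$. Since the integrand $w\mapsto\dfrac{(1-|z|^2)^{n+1+\alpha}}{|1-\langle w,z\rangle|^{2(n+1+\alpha)}}$ is nonnegative, discarding the part over $\mathbb{B}^n\setminus D(z,\delta)$ gives
\[
\tilde{\mu}(z)\ \ge\ \int_{D(z,\delta)}\frac{(1-|z|^2)^{n+1+\alpha}}{|1-\langle w,z\rangle|^{2(n+1+\alpha)}}\,d\mu(w).
\]

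Next I would invoke the well-known comparabilities recalled above: for $w\in D(z,\delta)$ one has $|1-\langle w,z\rangle|^{n+1+\alpha}\approx\nu_\alpha(D(z,\delta))\approx(1-|z|^2)^{n+1+\alpha}$, with implied constants depending only on $n$, $\alpha$, and $\delta$. Plugging the upper bound for $|1-\langle w,z\rangle|^{2(n+1+\alpha)}$ into the denominator and the equivalence $(1-|z|^2)^{n+1+\alpha}\approx\nu_\alpha(D(z,\delta))$ into the numerator shows that the integrand above is bounded below by a constant multiple of $\nu_\alpha(D(z,\delta))^{-1}$ throughout $D(z,\delta)$, so that
\[
\tilde{\mu}(z)\ \gtrsim\ \frac{1}{\nu_\alpha(D(z,\delta))}\int_{D(z,\delta)}d\mu(w)\ =\ \frac{\mu(D(z,\delta))}{\nu_\alpha(D(z,\delta))}\ =\ \hat{\mu}_\delta(z).
\]
Taking $C_\delta$ to be the reciprocal of the accumulated constant then yields the assertion.

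The only point worth flagging is the $\delta$-dependence of $C_\delta$: the comparabilities on $D(z,\delta)$ used above have constants that blow up both as $\delta\to0$ and as $\delta\to1$, so $C_\delta$ cannot be chosen uniformly in $\delta$; for each fixed $\delta\in(0,1)$, however, it is finite, which is exactly what is claimed. Beyond this bookkeeping there is no genuine obstacle: the proof is a one-line localization of the Berezin integral, with every metric fact it uses quoted from \cite{Zhu}.
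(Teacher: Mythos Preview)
Your argument is correct and is precisely the standard localization proof the paper has in mind: the paper does not give its own proof but simply cites the result as an elementary exercise from \cite[Page 16]{Zhu3}, and your computation is exactly that exercise.
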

We also observe the following.
\begin{lemma}\label{lem:AveragesupBerezin}
Given $\delta\in (0,1)$, there exists $C_\delta>0$ such that
$$\tilde {\mu}(z)\le C_\delta\Tilde {\hat {\mu}}_\delta(z),\,\,\,\textrm{for any}\,\,\,z\in \mathbb{B}^n.$$
\end{lemma}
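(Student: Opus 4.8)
The plan is to prove Lemma~\ref{lem:AveragesupBerezin}, which asserts that the Berezin transform $\tilde\mu$ is dominated pointwise by the Berezin transform of the averaging function $\hat\mu_\delta$, up to a constant depending on $\delta$. First I would write out the definition of $\Tilde{\hat\mu}_\delta(z)$ explicitly: since $\hat\mu_\delta$ is a function, we have
\[
\Tilde{\hat\mu}_\delta(z)=\int_{\mathbb{B}^n}\frac{(1-|z|^2)^{n+1+\alpha}}{|1-\langle w,z\rangle|^{2(n+1+\alpha)}}\,\hat\mu_\delta(w)\,d\nu_\alpha(w)
=\int_{\mathbb{B}^n}\frac{(1-|z|^2)^{n+1+\alpha}}{|1-\langle w,z\rangle|^{2(n+1+\alpha)}}\cdot\frac{\mu(D(w,\delta))}{\nu_\alpha(D(w,\delta))}\,d\nu_\alpha(w).
\]
The idea is then to unfold $\mu(D(w,\delta))=\int_{\mathbb{B}^n}\chi_{D(w,\delta)}(\zeta)\,d\mu(\zeta)$ and apply Fubini to interchange the order of integration, obtaining
\[
\Tilde{\hat\mu}_\delta(z)=\int_{\mathbb{B}^n}\left(\int_{D(\zeta,\delta)}\frac{(1-|z|^2)^{n+1+\alpha}}{|1-\langle w,z\rangle|^{2(n+1+\alpha)}}\cdot\frac{d\nu_\alpha(w)}{\nu_\alpha(D(w,\delta))}\right)d\mu(\zeta),
\]
where we used that $\zeta\in D(w,\delta)\iff w\in D(\zeta,\delta)$ (the Bergman metric is symmetric).

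Next I would estimate the inner integral from below. For $w\in D(\zeta,\delta)$, the standard comparabilities for the Bergman metric recalled in the excerpt give $\nu_\alpha(D(w,\delta))\approx\nu_\alpha(D(\zeta,\delta))\approx(1-|\zeta|^2)^{n+1+\alpha}\approx|1-\langle \zeta,z\rangle|^{n+1+\alpha}$ adjusted appropriately; more precisely $\nu_\alpha(D(w,\delta))\approx(1-|w|^2)^{n+1+\alpha}$ and $1-|w|^2\approx 1-|\zeta|^2$, $|1-\langle w,z\rangle|\approx|1-\langle\zeta,z\rangle|$ for $w\in D(\zeta,\delta)$, with constants depending only on $\delta$. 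Hence the integrand of the inner integral is, for $w\in D(\zeta,\delta)$, comparable to
\[
\frac{(1-|z|^2)^{n+1+\alpha}}{|1-\langle \zeta,z\rangle|^{2(n+1+\alpha)}}\cdot\frac{1}{(1-|\zeta|^2)^{n+1+\alpha}},
\]
up to a constant $C_\delta$. Integrating this over $w\in D(\zeta,\delta)$ against $d\nu_\alpha(w)$ produces a factor $\nu_\alpha(D(\zeta,\delta))\approx(1-|\zeta|^2)^{n+1+\alpha}$, which cancels the $(1-|\zeta|^2)^{-(n+1+\alpha)}$. Therefore the inner integral is bounded below by $c_\delta\,\dfrac{(1-|z|^2)^{n+1+\alpha}}{|1-\langle \zeta,z\rangle|^{2(n+1+\alpha)}}$, and plugging this back into the Fubini identity yields
\[
\Tilde{\hat\mu}_\delta(z)\ \gtrsim_\delta\ \int_{\mathbb{B}^n}\frac{(1-|z|^2)^{n+1+\alpha}}{|1-\langle \zeta,z\rangle|^{2(n+1+\alpha)}}\,d\mu(\zeta)=\tilde\mu(z),
\]
which is exactly the claimed inequality after moving the constant.

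The only genuinely delicate point is verifying that all the metric comparabilities $(1-|w|^2)\approx(1-|\zeta|^2)$ and $|1-\langle w,z\rangle|\approx|1-\langle \zeta,z\rangle|$ hold uniformly for $w\in D(\zeta,\delta)$ with constants depending only on $\delta$ (and $n,\alpha$) but not on $\zeta$ or $z$; this is standard (it follows from the displayed estimate $\nu_\alpha(D(z,\delta))\approx|1-\langle z,w\rangle|^{n+1+\alpha}\approx(1-|w|^2)^{n+1+\alpha}$ recalled just before Theorem~\ref{thm:covering}, together with the triangle inequality for $d$), so in practice there is no real obstacle — the lemma is, as the excerpt notes, essentially bookkeeping. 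I would also remark that this lemma is the converse direction to Lemma~\ref{lem:Berezinsupaverage}, so together they show $\hat\mu_\delta\approx$ and $\tilde\mu$ are interchangeable (up to applying the Berezin transform) inside $L^{\Phi_3}(\mathbb{B}^n,d\nu_\alpha)$, which is what will make the equivalence of (b) and (c) in Theorem~\ref{theo:main} go through once the Berezin transform is known to be bounded on that space by Lemma~\ref{lem:berezinbound}.
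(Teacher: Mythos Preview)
Your proof is correct and follows essentially the same route as the paper's: both arguments unfold one of the two sides, apply Fubini together with the symmetry $\chi_{D(\zeta,\delta)}(w)=\chi_{D(w,\delta)}(\zeta)$, and then use the standard comparabilities $\nu_\alpha(D(w,\delta))\approx\nu_\alpha(D(\zeta,\delta))$ and the stability of $|1-\langle\cdot,z\rangle|$ on Bergman balls. The only cosmetic difference is that the paper starts from $\tilde\mu(z)$ and invokes Lemma~\ref{lem:meanvalue} applied to $|k_{\alpha,z}|^2$ to replace the kernel value by its average over $D(\xi,\delta)$, whereas you start from $\Tilde{\hat\mu}_\delta(z)$ and invoke the pointwise comparability $|1-\langle w,z\rangle|\approx|1-\langle\zeta,z\rangle|$ directly; these two devices are equivalent here, and the rest of the argument is identical.
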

\begin{proof}
Using Lemma \ref{lem:meanvalue}
and Fubini's lemma, we obtain
\Beas
\tilde {\mu}(z) &=& \int_{\mathbb{B}^n}|k_{\alpha,z}(\xi)|^2d\mu(\xi)\\ &\lesssim& \int_{\mathbb{B}^n} \frac{1}{\nu_\alpha(D(\xi,\delta))}\int_{D(\xi,\delta)}|k_{\alpha,z}(w)|^2d\nu_\alpha(w)d\mu(\xi)\\ &=& \int_{\mathbb{B}^n} \frac{1}{\nu_\alpha(D(\xi,\delta))}\int_{\mathbb{B}^n}|k_{\alpha,z}(w)|^2\chi_{D(\xi,\delta)}(w)d\nu_\alpha(w)d\mu(\xi)\\ &=& \int_{\mathbb{B}^n} \frac{1}{\nu_\alpha(D(\xi,\delta))}\int_{\mathbb{B}^n}|k_{\alpha,z}(w)|^2\chi_{D(w,\delta)}(\xi)d\nu_\alpha(w)d\mu(\xi)\\ &\approx& \int_{\mathbb{B}^n}\frac{\mu(D(w,\delta))}{\nu_\alpha(D(\xi,\delta)}|k_{\alpha,z}(w)|^2d\nu_\alpha(w)\\ &=& \Tilde {\hat {\mu}}_\delta(z).
\Eeas
We have used that $\chi_{D(\xi,\delta)}(w)=\chi_{D(w,\delta)}(\xi)$ and that as $w\in D(\xi,\delta)$, $\nu_\alpha(D(\xi,\delta))\approx \nu_\alpha(D(w,\delta))$.
\end{proof}
We then have the following result.

\begin{lemma}\label{lem:integraldiscretizationAverBer}
Let $\Phi\in \mathscr{L}\cup\mathscr{U}$, $\alpha>-1$, and $r,s\in (0,1)$. Let $a=\{a_j\}_{j\in \mathbb N}$ be a $r$-lattice in $\mathbb{B}^n$. Then the following assertions are equivalent.
\begin{itemize}
\item[(i)] $\hat {\mu}_s\in L_\alpha^\Phi(\mathbb{B}^n)$.
\item[(ii)] $\{\hat {\mu}_r(z_j)\}_{j\in \mathbb N}\in l_{a,\alpha}^\Phi$.\\
If moreover, $\Phi\in \mathscr{U}$ and satisfies the $\nabla_2$-condition, then the above assertions are both equivalent to the following,
\item[(iii)] The Berezin transform $\tilde {\mu}$  of the measure $\mu$ belongs to $L^{\Phi}(\mathbb B^n, d\nu_\alpha)$
\end{itemize}
\end{lemma}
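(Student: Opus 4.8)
The plan is to establish the equivalence $(i)\Leftrightarrow(ii)$ first, and then, under the extra hypotheses, to close the loop with $(iii)$ using the two pointwise comparisons already proved (Lemmas \ref{lem:Berezinsupaverage} and \ref{lem:AveragesupBerezin}) together with the boundedness of the Berezin transform (Lemma \ref{lem:berezinbound}). For $(i)\Leftrightarrow(ii)$, I would first invoke Lemma \ref{lem:variationoflattice} to reduce to the case $s=r$, so that the lattice radius matches the averaging radius. For the implication $(i)\Rightarrow(ii)$, I use the covering property (i) of Theorem \ref{thm:covering}: the balls $D(a_j,\frac r4)$ are pairwise disjoint and, for $z\in D(a_j,\frac r4)$, one has $\hat\mu_r(a_j)\approx \hat\mu_{r/2}(z)$ up to constants depending only on $r$ (because $D(z,\frac r2)\subseteq D(a_j,r)\subseteq D(z,2r)$ and the $\nu_\alpha$-measures of all these balls are comparable). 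Hence, using that $\Phi$ is non-decreasing and, if $\Phi\in\mathscr L_p$, of lower type $p$ to absorb the multiplicative constant,
\[
\sum_j (1-|a_j|^2)^{n+1+\alpha}\Phi(\hat\mu_r(a_j)) \lesssim \sum_j \int_{D(a_j,r/4)} \Phi(C\,\hat\mu_{r/2}(z))\,\frac{d\nu(z)}{(1-|z|^2)^{n+1}} \lesssim \int_{\mathbb B^n}\Phi(\hat\mu_{r/2}(z))\,d\nu_\alpha(z),
\]
where in the last step I used the disjointness of the $D(a_j,\frac r4)$ and $(1-|z|^2)^{n+1+\alpha}\approx(1-|a_j|^2)^{n+1+\alpha}$ on these balls. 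By Lemma \ref{lem:variationoflattice} the right-hand side is finite iff $\hat\mu_s\in L_\alpha^\Phi$, giving $(ii)$.

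For the reverse implication $(ii)\Rightarrow(i)$, I use the covering property (ii)--(iii) of Theorem \ref{thm:covering}: $\mathbb B^n=\bigcup_j D(a_j,r)$ with bounded overlap of the dilated balls. For $z\in D(a_j,r)$ we have $\hat\mu_{r/2}(z)\lesssim \hat\mu_{2r}(a_j)\lesssim \hat\mu_r(a_j)$ (again by containment of Bergman balls and comparability of their weighted volumes, invoking Lemma \ref{lem:variationoflattice}-type control on the radius; more directly one uses the standard fact that for different radii the averaging functions are pointwise comparable up to multiplicative constants). Therefore
\[
\int_{\mathbb B^n}\Phi(\hat\mu_{r/2}(z))\,d\nu_\alpha(z) \le \sum_j \int_{D(a_j,r)}\Phi(\hat\mu_{r/2}(z))\,d\nu_\alpha(z) \lesssim \sum_j \nu_\alpha(D(a_j,r))\,\Phi(C\,\hat\mu_r(a_j)) \lesssim \sum_j (1-|a_j|^2)^{n+1+\alpha}\Phi(\hat\mu_r(a_j)),
\]
using $\nu_\alpha(D(a_j,r))\approx (1-|a_j|^2)^{n+1+\alpha}$ and absorbing the constant $C$ via the lower type of $\Phi$ (if $\Phi\in\mathscr L_p$) or, if $\Phi\in\mathscr U$, via its $\Delta_2$-property — note that any $\Phi\in\mathscr U\cup\mathscr L$ satisfies $\Phi(Ct)\lesssim\Phi(t)$ with the implicit constant depending on $C$. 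Finiteness of the right-hand side is exactly $(ii)$, and by Lemma \ref{lem:variationoflattice} finiteness of the left-hand side gives $\hat\mu_s\in L_\alpha^\Phi$, i.e. $(i)$.

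Finally, assume in addition $\Phi\in\mathscr U$ satisfies the $\nabla_2$-condition. The implication $(iii)\Rightarrow(i)$ is immediate: by Lemma \ref{lem:Berezinsupaverage}, $\hat\mu_\delta(z)\le C_\delta\tilde\mu(z)$ pointwise, so $\Phi(\hat\mu_s(z))\le \Phi(C_s\tilde\mu(z))\lesssim\Phi(\tilde\mu(z))$ (using $\Delta_2$), whence $\tilde\mu\in L^\Phi(\mathbb B^n,d\nu_\alpha)$ forces $\hat\mu_s\in L_\alpha^\Phi(\mathbb B^n)$. For $(i)\Rightarrow(iii)$, I use Lemma \ref{lem:AveragesupBerezin}: $\tilde\mu(z)\le C_\delta\widetilde{\hat\mu_\delta}(z)$, so $\Phi(\tilde\mu(z))\lesssim\Phi(\widetilde{\hat\mu_\delta}(z))$ by $\Delta_2$; since the Berezin transform is bounded on $L_\alpha^\Phi$ by Lemma \ref{lem:berezinbound}, and $\hat\mu_\delta\in L_\alpha^\Phi$ by $(i)$ (together with Lemma \ref{lem:variationoflattice} to switch between radii $s$ and $\delta$), we get $\widetilde{\hat\mu_\delta}\in L_\alpha^\Phi$, hence $\tilde\mu\in L^\Phi(\mathbb B^n,d\nu_\alpha)$. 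The main technical point to handle carefully is the uniform control of the multiplicative constants when passing between averaging radii and when absorbing them through $\Phi$: for $\Phi\in\mathscr L_p$ this is the lower-type inequality, while for $\Phi\in\mathscr U$ one needs $\Phi$ to be of upper type (equivalently $\Delta_2$), which is automatic for $\mathscr U$; this is where the hypothesis $\Phi\in\mathscr L\cup\mathscr U$ (rather than a general growth function) is genuinely used.
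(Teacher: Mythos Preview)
Your approach coincides with the paper's: for the link with (iii) the paper uses exactly Lemmas \ref{lem:Berezinsupaverage}, \ref{lem:AveragesupBerezin} and \ref{lem:berezinbound} as you do, and for (i)$\Leftrightarrow$(ii) it simply cites the power-function argument in \cite{Choe,nanasehba}, of which your write-up is the natural Orlicz adaptation.

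There is, however, one genuine slip in your (ii)$\Rightarrow$(i) step. You assert that ``for different radii the averaging functions are pointwise comparable up to multiplicative constants,'' and use this to pass from $\hat\mu_{2r}(a_j)$ to $\hat\mu_r(a_j)$. That pointwise comparability is false: if $\mu$ is supported in the annulus $D(a_j,2r)\setminus D(a_j,r)$ then $\hat\mu_r(a_j)=0$ while $\hat\mu_{2r}(a_j)>0$. What the containment $D(z,r/2)\subseteq D(a_j,2r)$ actually gives you is
\[
\int_{\mathbb B^n}\Phi\bigl(\hat\mu_{r/2}(z)\bigr)\,d\nu_\alpha(z)\;\lesssim\;\sum_j(1-|a_j|^2)^{n+1+\alpha}\Phi\bigl(\hat\mu_{2r}(a_j)\bigr),
\]
so one still needs to bound the right-hand side by the corresponding sum with radius $r$. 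This is done by a separate (short) covering argument: since $\mathbb B^n=\bigcup_k D(a_k,r)$, one has $D(a_j,2r)\subseteq\bigcup_{k:\,d(a_k,a_j)<3r}D(a_k,r)$, and by separation of the lattice the number of such $k$ is bounded by a constant $M$ depending only on $r$; moreover $(1-|a_k|^2)\approx(1-|a_j|^2)$ for these $k$. Hence $\hat\mu_{2r}(a_j)\lesssim\sum_{d(a_k,a_j)<3r}\hat\mu_r(a_k)$, so $\Phi(\hat\mu_{2r}(a_j))\lesssim\sum_{d(a_k,a_j)<3r}\Phi(\hat\mu_r(a_k))$, and summing over $j$ (each $k$ contributing at most $M$ times) yields the desired control. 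With this patch your argument is complete and matches the literature proof the paper cites.
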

\begin{proof}
The equivalence (i)$\Leftrightarrow$(ii) follows as in the power functions case (see \cite[Theorem 3.9]{Choe} or \cite[Lemma 2.12]{nanasehba} ).
That (i)$\Rightarrow$(ii) is Lemma \ref{lem:AveragesupBerezin} together with Lemma \ref{lem:berezinbound}. That (iii)$\Rightarrow$(i) follows from Lemma \ref{lem:Berezinsupaverage}. 
\end{proof}

\section{Carleson measures for weighted Bergman-Orlicz spaces}
In this section, we present the proof of our characterization of Carleson measures for weighted Bergman-Orlicz spaces. We start by recalling the classical Khinchine's inequality. We recall that the Rademacher functions $r_n$ in $(0,1]$ are defined as follows
$$
r_n(t)=sgn(\sin 2^n\pi t),\,n=0,1,2\ldots
$$

\begin{lemma}[Khinchine's inequality]\label{lem:Kinchine}
For $0 < p <\infty$ there exist constants $0 <A_p\le B_p <\infty$ such that for any sequence of complex numbers $x=\{x_k\}\in \ell^2$,

\Be\label{eq:kinchine}A_p\left(\sum_{k}|x_k|^2\right)^{1/2}\le \left(\int_0^1\left|\sum_{k}x_kr_k(t)\right|^pdt\right)^{1/p}\le B_p\left(\sum_{k}|x_k|^2\right)^{1/2}.\Ee
\end{lemma}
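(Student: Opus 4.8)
The plan is to prove the classical Khinchine inequality in the form stated, since the author will certainly use it as a black box but may want a self-contained treatment; the cleanest route is the standard one via the subgaussian estimate for sums of Rademacher functions. First I would reduce to real coefficients: for complex $x_k = \alpha_k + i\beta_k$ one has $|\sum_k x_k r_k(t)|^2 = |\sum_k \alpha_k r_k(t)|^2 + |\sum_k \beta_k r_k(t)|^2$, so the real case applied to $\{\alpha_k\}$ and $\{\beta_k\}$ together with the elementary inequalities $(a^2+b^2)^{1/2}\le a+b$ and $a+b \le \sqrt2 (a^2+b^2)^{1/2}$ (for $a,b\ge 0$) upgrades the result to complex scalars at the cost of adjusting the constants. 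It also suffices to treat finitely many nonzero $x_k$ and then pass to the limit using monotone/dominated convergence and the fact that the right-hand estimate gives convergence of the series $\sum_k x_k r_k$ in $L^p(0,1)$; and by homogeneity we may normalize $\sum_k |x_k|^2 = 1$.

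Next I would establish the upper bound. By independence and mean-zero of the Rademacher functions, $\int_0^1 |\sum_k x_k r_k(t)|^2\,dt = \sum_k |x_k|^2$, which handles $p=2$ with $A_2 = B_2 = 1$. For $p \le 2$ the upper bound follows immediately from Hölder / Jensen (the $L^p$ norm is dominated by the $L^2$ norm on a probability space). For $p > 2$ I would use the subgaussian tail estimate: for real coefficients with $\sum_k x_k^2 = 1$, the function $S(t) = \sum_k x_k r_k(t)$ satisfies $\int_0^1 e^{\lambda S(t)}\,dt = \prod_k \cosh(\lambda x_k) \le \prod_k e^{\lambda^2 x_k^2/2} = e^{\lambda^2/2}$, using $\cosh u \le e^{u^2/2}$. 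This gives $|\{t : |S(t)| > \sigma\}| \le 2 e^{-\sigma^2/2}$, and integrating $p\sigma^{p-1}$ against this tail bound yields $\int_0^1 |S(t)|^p\,dt \le C_p < \infty$ with an explicit $C_p$ (expressible via the Gamma function), giving $B_p$.

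Finally, the lower bound. For $p \ge 2$ it is again trivial from Jensen ($L^2 \le L^p$ on a probability space, so $A_p = 1$ works). The only genuinely nontrivial case is $0 < p < 2$, and here I would use the standard interpolation trick: writing $\|S\|_r$ for the $L^r(0,1)$ norm, pick $\theta \in (0,1)$ with $\frac12 = \theta/p + (1-\theta)/4$, and apply the interpolation inequality $\|S\|_2 \le \|S\|_p^{\theta}\,\|S\|_4^{1-\theta}$. Since $\|S\|_2 = 1$ and $\|S\|_4 \le B_4$ by the already-proved upper bound, this rearranges to $\|S\|_p \ge B_4^{-(1-\theta)/\theta} =: A_p > 0$. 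I expect the main obstacle to be purely organizational rather than mathematical: being careful about the order of the reductions (finite sums, normalization, real coefficients) so that the passage to infinite $\ell^2$ sequences is rigorous, and tracking that the constants obtained are genuinely finite and positive and independent of the sequence; the $\cosh u \le e^{u^2/2}$ estimate and the interpolation step are the two computational load-bearing facts, and both are elementary.
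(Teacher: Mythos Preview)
Your proof is correct and follows the standard route to the classical Khinchine inequality (subgaussian bound via $\cosh u\le e^{u^2/2}$ for the upper estimate when $p>2$, and the $L^p$--$L^4$ interpolation trick for the lower estimate when $0<p<2$). Note, however, that the paper does not prove this lemma at all: it is merely \emph{recalled} as a classical result and then used as a black box in the proof of the extended Khinchine inequality for growth functions. So there is no ``paper's own proof'' to compare against; your write-up would simply supply a self-contained argument where the paper gives none.
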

We next show how to extend this result to the case where the power function $t^p$ is replaced by any growth function in the classes considered in this note.
\begin{theorem}[Extended Khinchine's inequality]\label{lem:Kinchine}
For $\Phi\in \mathscr{L}\cup\mathscr{U}$, there exist constants $0 <A_\Phi\le B_\Phi <\infty$ such that, for any sequence $x=\{x_k\}\in \ell^2$,

\Be\label{eq:kinchinegene}A_\Phi\left(\sum_{k}|x_k|^2\right)^{1/2}\le \Phi^{-1}\left(\int_0^1\Phi\left(\left|\sum_{k}x_kr_k(t)\right|\right)dt\right)\le B_\Phi\left(\sum_{k}|x_k|^2\right)^{1/2}.\Ee
\end{theorem}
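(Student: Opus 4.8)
The plan is to reduce the Orlicz statement to the classical Khinchine inequality (Lemma~\ref{lem:Kinchine}) by exploiting the finiteness of the measure $dt$ on $(0,1]$ together with the type conditions on $\Phi$. Set $S(t)=\left|\sum_k x_k r_k(t)\right|$ and $\sigma=\left(\sum_k|x_k|^2\right)^{1/2}$; we may assume $\sigma>0$ and, by homogeneity of both sides, normalize to $\sigma=1$, so that the classical inequality gives $A_p\le\|S\|_{L^p(0,1)}\le B_p$ for every $p\in(0,\infty)$. The goal becomes $A_\Phi\le\Phi^{-1}\!\left(\int_0^1\Phi(S(t))\,dt\right)\le B_\Phi$, i.e.\ $A_\Phi\le\Phi^{-1}(\mathcal I)\le B_\Phi$ where $\mathcal I:=\int_0^1\Phi(S(t))\,dt$.

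First I would treat the upper bound. If $\Phi\in\mathscr U^q$ then $\Phi$ is of upper type $q$, so $\Phi(S)\lesssim \Phi(1)(1+S^q)$ pointwise (splitting into $S\le 1$ and $S\ge 1$ and using $\Phi$ non-decreasing on the first piece, upper type $q$ on the second). Integrating and using $\|S\|_{L^q}\le B_q$ gives $\mathcal I\lesssim \Phi(1)(1+B_q^q)=:\Phi(1)M_q$; since $\Phi$ is non-decreasing so is $\Phi^{-1}$, hence $\Phi^{-1}(\mathcal I)\le \Phi^{-1}(\Phi(1)M_q)$. Because $\Phi\in\mathscr U^q$ means $t\mapsto \Phi(t)/t$ is non-decreasing, one checks $\Phi(\lambda t)\ge \lambda\Phi(t)$ for $\lambda\ge 1$, equivalently $\Phi^{-1}(\lambda s)\le \lambda\Phi^{-1}(s)$ for $\lambda\ge 1$; applying this with $s=\Phi(1)$, $\lambda=M_q$ (assuming $M_q\ge 1$, which we may) yields $\Phi^{-1}(\mathcal I)\le M_q\cdot 1=B_\Phi$. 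If instead $\Phi\in\mathscr L_p$, then $\Phi$ is concave with $\Phi(0)=0$, so $\Phi$ is subadditive and Jensen's inequality applies directly: $\mathcal I=\int_0^1\Phi(S(t))\,dt\le \Phi\!\left(\int_0^1 S(t)\,dt\right)\le \Phi(\|S\|_{L^1})\le\Phi(B_1)$, whence $\Phi^{-1}(\mathcal I)\le B_1=:B_\Phi$.

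For the lower bound, the concave case is again the easy one: by Jensen (or by concavity and $\int_0^1 dt=1$), $\Phi^{-1}(\mathcal I)=\Phi^{-1}\!\left(\int_0^1\Phi(S)\,dt\right)\ge \int_0^1 \Phi^{-1}(\Phi(S))\,dt=\|S\|_{L^1}\ge A_1$ since $\Phi^{-1}$ is convex when $\Phi$ is concave (Proposition~\ref{phiandinverse}). In the convex case $\Phi\in\mathscr U^q$ one instead uses the lower index: with $a_\Phi\ge 1$ the function $t\mapsto\Phi(t)/t^{a_\Phi}$ is non-decreasing, hence $\Phi(S)\gtrsim \Phi(1)S^{a_\Phi}$ on $S\ge 1$ while trivially $\Phi(S)\ge 0$ everywhere; more efficiently, choose any fixed $p$ with $0<p\le a_\Phi$ and note $\Phi(S)\ge c\,\Phi(1)S^{p}$ on the set $\{S\le 1\}$ fails, so instead I would argue via $\int_0^1\Phi(S)\,dt\ge \Phi\!\left(\text{median of }S\right)\cdot\frac12$ using that $S$ is not too concentrated: by Paley--Zygmund / the classical Khinchine inequality with two exponents, $\mathbb P(S\ge c_0)\ge c_1>0$ for absolute constants $c_0,c_1$, whence $\mathcal I\ge c_1\Phi(c_0)$ and $\Phi^{-1}(\mathcal I)\ge \Phi^{-1}(c_1\Phi(c_0))\gtrsim c_0=:A_\Phi$, the last step using $\Phi^{-1}(\lambda s)\gtrsim\lambda^{1/q}\Phi^{-1}(s)$ for $\lambda\le 1$ which follows from upper type $q$.

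The main obstacle is bookkeeping the two-sided comparison $\Phi^{-1}(\lambda s)\approx$ (power of $\lambda$)$\cdot\Phi^{-1}(s)$ uniformly and making sure all constants are absolute (depending only on $\Phi$, not on $x$); this is where the structural hypotheses $\Phi\in\mathscr L_p$ or $\Phi\in\mathscr U^q$ and the monotonicity of $\Phi(t)/t$ are essential, and where one must be careful that the Paley--Zygmund lower bound on $\mathbb P(S\ge c_0)$ after normalization is genuinely independent of the sequence $\{x_k\}$ — this follows by applying the classical Khinchine inequality with two different exponents (say $p=1$ and $p=4$) and Hölder. Once these elementary facts about growth functions are isolated as a sublemma, the proof is a short case analysis (concave vs.\ convex) feeding the classical inequality through $\Phi$ and $\Phi^{-1}$.
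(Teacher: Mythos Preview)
Your argument for the lower bound in the concave case is wrong: if $\Phi\in\mathscr L$ then $\Phi^{-1}\in\mathscr U$ is convex, and Jensen's inequality for a convex function on a probability space reads
\[
\Phi^{-1}\!\left(\int_0^1\Phi(S(t))\,dt\right)\ \le\ \int_0^1\Phi^{-1}\big(\Phi(S(t))\big)\,dt=\int_0^1 S(t)\,dt,
\]
which is the reverse of what you wrote. So the displayed chain ``$\Phi^{-1}(\mathcal I)\ge\int_0^1\Phi^{-1}(\Phi(S))\,dt=\|S\|_{L^1}\ge A_1$'' does not hold, and the concave lower bound is unproved. Your Paley--Zygmund fallback would in fact rescue this case too, but as written you invoke it only for $\Phi\in\mathscr U$.

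The paper's route avoids this pitfall by unifying both cases with one substitution: with $p_\Phi$ as in \eqref{eq:pPhi}, the auxiliary function $\Phi_{p_\Phi}(t)=\Phi(t^{1/p_\Phi})$ is always in $\mathscr U$ (Lemma~\ref{lem:phip}), hence convex, so Jensen applied to $\Phi_{p_\Phi}$ combined with the classical Khinchine inequality at exponent $p_\Phi$ gives
\[
\Phi\!\left(A_{p_\Phi}\sigma\right)=\Phi_{p_\Phi}\!\left(A_{p_\Phi}^{p_\Phi}\sigma^{p_\Phi}\right)\le \Phi_{p_\Phi}\!\left(\int_0^1|S|^{p_\Phi}dt\right)\le \int_0^1\Phi_{p_\Phi}\big(|S|^{p_\Phi}\big)\,dt=\int_0^1\Phi(|S|)\,dt
\]
in one stroke for both $\mathscr L$ and $\mathscr U$. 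For the upper bound the paper uses the symmetric trick: when $\Phi\in\mathscr U^q$ the function $\Phi_q(t)=\Phi(t^{1/q})$ is concave, so concave Jensen plus Khinchine at exponent $q$ gives $\int\Phi(|S|)\le\Phi_q(\int|S|^q)\le\Phi(B_q\sigma)$; when $\Phi\in\mathscr L$, convexity of $\Phi^{-1}$ plus Khinchine at exponent $1$ suffices. This is more economical than your upper-bound argument via $\Phi(S)\lesssim\Phi(1)(1+S^q)$ and the subsequent manipulation of $\Phi^{-1}(\lambda s)$, and it sidesteps the Paley--Zygmund detour entirely.
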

\begin{proof}
Let us first prove the left inequality in (\ref{eq:kinchinegene}). If $\Phi\in \mathscr{L}\cup\mathscr{U}$, then if $p_\Phi$ is defined as in (\ref{eq:pPhi}), we recall with Lemma \ref{lem:phip} that the growth function $\Phi_{p_\Phi}(t)=\Phi(t^{\frac 1{p_\Phi}})$ belongs to $\mathscr{U}$. Then applying $\Phi$ to the left inequality in (\ref{eq:kinchine}) with $p=p_\Phi$, and using Jensen's inequality, we obtain
\Beas
\Phi\left(A_p\left(\sum_{k}|x_k|^2\right)^{1/2}\right)&\le& \Phi\left(\left(\int_0^1\left|\sum_{k}x_kr_k(t)\right|^pdt\right)^{1/p}\right)\\ &=& \Phi_{p_\Phi}\left(\int_0^1\left|\sum_{k}x_kr_k(t)\right|^pdt\right)\\ &\le& \int_0^1\Phi_{p_\Phi}\left(\left|\sum_{k}x_kr_k(t)\right|^p\right)dt\\ &=& \int_0^1\Phi\left(\left|\sum_{k}x_kr_k(t)\right|\right)dt.
\Eeas
Let us now prove the right inequality. If $\Phi\in \mathscr{L}$, then by Proposition \ref{phiandinverse}, $\Phi^{-1}\in \mathscr{U}$. It follows using the Jensen's inequality and the right hand side of (\ref{eq:kinchine}) with $p=1$ that
\Beas
\Phi^{-1}\left(\int_0^1\Phi\left(\left|\sum_{k}x_kr_k(t)\right|\right)dt\right) &\le& \int_0^1\left|\sum_{k}x_kr_k(t)\right|dt\\ &\le& B_1\left(\sum_{k}|x_k|^2\right)^{1/2}.
\Eeas
If $\Phi\in \mathscr{U}$, and if $q$ is its upper indice, then the growth function $\Phi_q(t)=\Phi(t^{1/q})$ belongs to $\mathscr{L}$. Hence using the right hand side of (\ref{eq:kinchine}) with $p=q$, we first obtain
\Beas
\Phi_q\left(\int_0^1\left|\sum_{k}x_kr_k(t)\right|^qdt\right) &=& \Phi\left(\left(\int_0^1\left|\sum_{k}x_kr_k(t)\right|^qdt\right)^{1/q}\right)\\ &\le& \Phi\left(B_q\left(\sum_{k}|x_k|^2\right)^{1/2}\right).
\Eeas
As $\Phi_q$ is concave, it follows that 
\Beas
\int_0^1\Phi_q\left(\left|\sum_{k}x_kr_k(t)\right|^q\right)dt &\le& \Phi_q\left(\int_0^1\left|\sum_{k}x_kr_k(t)\right|^qdt\right).
\Eeas
Hence 
$$\int_0^1\Phi\left(\left|\sum_{k}x_kr_k(t)\right|\right)dt\le \Phi\left(B_q\left(\sum_{k}|x_k|^2\right)^{1/2}\right).$$
The proof is complete.
\end{proof}
\begin{proof}[Proof of Theorem \ref{theo:main}]
We observe that the equivalence (b)$\Leftrightarrow$(c) is given in Lemma \ref{lem:integraldiscretizationAverBer}. It is then enough to prove that (a)$\Leftrightarrow$(b). This is given in the result below.
\end{proof}
We have the following embedding with loss.
\begin{theorem}\label{theo:main} Let $\Phi_1,\Phi_2\in \mathscr{L}\cup\mathscr{U}$, $\alpha>-1$.  Assume that 
\begin{itemize}
\item[(i)] $\Phi_1\circ\Phi_2^{-1}$ satisfies the $\nabla_2$-condition;
\item[(ii)] $\frac{\Phi_1\circ\Phi_2^{-1}(t)}{t}$ is non-decreasing.
\end{itemize}
Let $\mu$ is a positive measure on $\mathbb B^n$, and let  $\Phi_3$ be complementary function of $\Phi_1\circ\Phi_2^{-1}$. Then the following assertions are satisfied.
 \begin{itemize}
 \item[(a)] If the function
$$\mathbb{B}^n\ni z\mapsto \frac{\mu(D(z,\delta))}{\nu_\alpha(D(z,\delta))}$$
belongs to $L_\alpha^{\Phi_3}(\mathbb{B}^n)$,  for some $\delta\in (0,1)$, then
there exists a constant $C>0$ such that for any $f\in A_\alpha^{\Phi_1}(\mathbb{B}^n)$, with $f\neq 0$,
\begin{equation}\label{eq:Carlembed21}
\int_{\mathbb{B}^n}\Phi_2\left(\frac{|f(z)|}{\|f\|_{\Phi_1,\alpha}^{lux}}\right)d\mu (z)\le C.
\end{equation}
 \item[(b)] If (\ref{eq:Carlembed21}) holds, then for any $0<\delta<1$, the average function $\hat {\mu}_\delta$ belongs to $L^{\Phi_3}(\mathbb B^n, d\nu_\alpha)$.
 \end{itemize}
 \end{theorem}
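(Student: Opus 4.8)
The plan is to follow Luecking's strategy, transporting it to the Orlicz setting via the extended Khinchine inequality (Theorem \ref{lem:Kinchine}), the atomic decomposition (Proposition \ref{prop:atomicdecomp}), and the sequence-space duality (Lemma \ref{lem:dualtityseq}). For part (a), fix a $\delta$-lattice $a=\{a_k\}$ with associated balls $D(a_k,\delta)$ covering $\mathbb{B}^n$ with bounded overlap $N$ (Theorem \ref{thm:covering}). Given $f\in A_\alpha^{\Phi_1}(\mathbb{B}^n)$ with $\|f\|_{\Phi_1,\alpha}^{lux}=1$, I would use Lemma \ref{lem:meanvalue} to dominate $\Phi_2(|f(z)|)$ on $D(a_k,\delta)$ by an average of $\Phi_2(|f|)$ over a slightly larger ball, so that
$$\int_{\mathbb{B}^n}\Phi_2(|f(z)|)\,d\mu(z)\lesssim \sum_k \mu(D(a_k,\delta))\,\Phi_2(M_kf),$$
where $M_kf$ is an $L^{p_{\Phi_2}}$-average of $|f|$ over $D(a_k,2\delta)$ normalized by $\nu_\alpha(D(a_k,2\delta))$. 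Writing $\Phi_2(M_kf)=\Phi_1\big((\Phi_1\circ\Phi_2^{-1})^{-1}(\Phi_2(M_kf))\big)$ and setting $b_k:=\mu(D(a_k,\delta))/\nu_\alpha(D(a_k,\delta))$, I would apply the sequence Hölder inequality dual to Lemma \ref{lem:dualtityseq} with the pair $(\Phi_1\circ\Phi_2^{-1},\Phi_3)$, which is legitimate since hypothesis (i) gives the $\nabla_2$-condition and (ii) places $\Phi_1\circ\Phi_2^{-1}$ in $\mathscr{U}$:
$$\sum_k \nu_\alpha(D(a_k,\delta))\,b_k\,\Phi_2(M_kf)\lesssim \Big(\sum_k \nu_\alpha(D(a_k,\delta))\,(\Phi_1\circ\Phi_2^{-1})^{-1}(\cdot)\Big)\Big(\sum_k\nu_\alpha(D(a_k,\delta))\,\Phi_3(b_k)\Big).$$
The second factor is finite because $\hat\mu_\delta\in L_\alpha^{\Phi_3}$ (using Lemma \ref{lem:variationoflattice} to pass between radii and an integral-to-discrete comparison as in Lemma \ref{lem:integraldiscretizationAverBer}). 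The first factor I would bound by $\int_{\mathbb{B}^n}\Phi_1(|f(z)|)\,d\nu_\alpha(z)\le 1$, again via Lemma \ref{lem:meanvalue} and bounded overlap; this is where one must be careful that $\Phi_1\circ\Phi_2^{-1}$ composed with $\Phi_2$ on an average genuinely reconstitutes $\Phi_1$ of $f$ up to constants, which is exactly what concavity/convexity plus Jensen furnish.

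For part (b), the converse, I would argue by contraposition using a randomization argument. Suppose $\hat\mu_\delta\notin L_\alpha^{\Phi_3}$; equivalently, by Lemma \ref{lem:integraldiscretizationAverBer}, the sequence $\{b_k\}$ with $b_k=\hat\mu_r(a_k)$ is not in $\ell_{a,\alpha}^{\Phi_3}$. By the duality Lemma \ref{lem:dualtityseq} (applied to the pair $(\Phi_1\circ\Phi_2^{-1},\Phi_3)$, whose hypotheses hold), there exists a sequence $\{t_k\}\in \ell_{a,\alpha}^{\Phi_1\circ\Phi_2^{-1}}$ — normalized, say, with unit norm — such that $\sum_k \nu_\alpha(D(a_k,\delta))\,t_k b_k=\infty$. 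Choosing $b$ large enough (as in Proposition \ref{prop:atomicdecomp}), form the randomized atomic functions
$$f_t(z)=\sum_k \varepsilon_k(t)\,\frac{c_k}{(1-\langle z,a_k\rangle)^b},\qquad c_k=(1-|a_k|^2)^b\,\Phi_2^{-1}(t_k),$$
where $\varepsilon_k(t)=r_k(t)$ are the Rademacher functions. Proposition \ref{prop:atomicdecomp} gives $\|f_t\|_{\Phi_1,\alpha}^{lux}\lesssim 1$ uniformly in $t$, since $\sum_k \nu_\alpha(D(a_k,\delta))\,\Phi_1(\Phi_2^{-1}(t_k))=\sum_k\nu_\alpha(D(a_k,\delta))\,(\Phi_1\circ\Phi_2^{-1})(t_k)<\infty$. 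If (\ref{eq:Carlembed21}) held, then $\int_{\mathbb{B}^n}\Phi_2(|f_t(z)|)\,d\mu(z)\le C$ for every $t$; integrating in $t$ over $(0,1)$ and applying Fubini gives $\int_{\mathbb{B}^n}\int_0^1\Phi_2(|f_t(z)|)\,dt\,d\mu(z)\le C$. Now the extended Khinchine inequality (Theorem \ref{lem:Kinchine}) applied pointwise in $z$ bounds $\int_0^1\Phi_2(|f_t(z)|)\,dt$ from below by $\Phi_2\big(A_{\Phi_2}(\sum_k |c_k|^2|1-\langle z,a_k\rangle|^{-2b})^{1/2}\big)$, so that
$$\int_{\mathbb{B}^n}\Phi_2\Big(A_{\Phi_2}\Big(\sum_k \frac{|c_k|^2}{|1-\langle z,a_k\rangle|^{2b}}\Big)^{1/2}\Big)\,d\mu(z)\le C.$$
Restricting the integral to $\bigcup_j D(a_j,\delta)$, on $D(a_j,\delta)$ the single term $k=j$ dominates: $\sum_k|c_k|^2|1-\langle z,a_k\rangle|^{-2b}\gtrsim |c_j|^2(1-|a_j|^2)^{-2b}=\Phi_2^{-1}(t_j)^2$. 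Hence the left side is $\gtrsim \sum_j \mu(D(a_j,\delta))\,\Phi_2(\Phi_2^{-1}(t_j))\approx \sum_j \nu_\alpha(D(a_j,\delta))\,b_j t_j=\infty$, contradicting finiteness. This forces $\hat\mu_\delta\in L_\alpha^{\Phi_3}$, and Lemma \ref{lem:variationoflattice} then yields the conclusion for every $0<\delta<1$.

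The main obstacle I anticipate is the correct handling of the nonlinear ``Hölder'' step in part (a): unlike the power-function case, splitting $\Phi_2(M_kf)$ as $\Phi_1(\cdot)$ times nothing is not available, so one must route through $(\Phi_1\circ\Phi_2^{-1})^{-1}\circ\Phi_2$ applied to averages and verify that Jensen's inequality goes the right way — which requires knowing $\Phi_1\circ\Phi_2^{-1}\in\mathscr{U}$ (hypothesis (ii)) so that $\Phi_2^{-1}$ is ``below'' $\Phi_1^{-1}$ in the appropriate sense, and then using Lemma \ref{lem:meanvalue} for the correct growth function on each ball. A secondary technical point is ensuring the passage between the integral condition on $\hat\mu_\delta$ and the discrete condition on $\{\hat\mu_r(a_k)\}$ is uniform in the parameters, which is handled by Lemma \ref{lem:variationoflattice} and the argument of Lemma \ref{lem:integraldiscretizationAverBer}; and in part (b) one must check that the lower bound from Khinchine combined with the domination of a single lattice term survives the nonlinearity of $\Phi_2$, which is why we keep $\Phi_2$ (not $\Phi_2^2$ or similar) throughout and exploit $\Phi_2(\Phi_2^{-1}(t_j))=t_j$ exactly.
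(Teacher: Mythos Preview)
Your proposal is correct and follows the same overall Luecking strategy as the paper (atomic decomposition, extended Khinchine, and the duality $\big(\ell_{a,\alpha}^{\Phi_1\circ\Phi_2^{-1}}\big)^*\simeq \ell_{a,\alpha}^{\Phi_3}$). Two execution differences are worth noting.

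For part (a) the paper does \emph{not} discretize: it applies Lemma~\ref{lem:meanvalue} to $\Phi_2(|f|)$, uses Fubini to obtain $\int_{\mathbb{B}^n}\Phi_2(|f(w)|)\hat\mu_r(w)\,d\nu_\alpha(w)$, and then applies the integral H\"older inequality (Lemma~\ref{lem:holdergenecompl}) directly with the pair $(\Phi_1\circ\Phi_2^{-1},\Phi_3)$, using the identity $(\Phi_1\circ\Phi_2^{-1})\big(\Phi_2(|f|)\big)=\Phi_1(|f|)$. This is shorter than your lattice route and avoids the Jensen gymnastics you flag as the ``main obstacle''. Incidentally, in your H\"older display you wrote $(\Phi_1\circ\Phi_2^{-1})^{-1}$ where you need $\Phi_1\circ\Phi_2^{-1}$ itself; the correct identity is the one above, not $\Phi_2(M_kf)=\Phi_1\big((\Phi_1\circ\Phi_2^{-1})^{-1}(\Phi_2(M_kf))\big)$, which is false.

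For part (b) the paper argues directly rather than by contraposition: it shows that the pairing $\sum_k \Phi_2(|d_k|)\mu(D_k)$ is uniformly bounded for every admissible sequence $\{c_k\}$, and then invokes Lemma~\ref{lem:dualtityseq}. To bound the pairing by the Khinchine expression the paper needs the pointwise inequality $\sum_k\Phi_2(|d_k|)\chi_{D_k}\lesssim \Phi_2\big((\sum_k|d_k|^2\chi_{D_k})^{1/2}\big)$, which it proves by a short case analysis on whether $t\mapsto\Phi_2(t^{1/2})$ lies in $\mathscr{U}$ or in some $\mathscr{L}_s$. Your contrapositive with the single-term lower bound on each $D(a_j,\delta)$ sidesteps that case analysis and is arguably cleaner; just be sure to invoke the bounded overlap (the integer $N$ in Theorem~\ref{thm:covering}) when you sum the local lower bounds over $j$.
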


\begin{proof}
Let us start with the proof of assertion (a). Let $K$ be the constant in (\ref{eq:meanvalue}). Then for $1>r>\delta>0$ fixed, we obtain using  (\ref{eq:meanvalue}) that
\Beas M &:=& \int_{\mathbb{B}^n}\Phi_2\left(\frac{|f(z)|}{\|f\|_{\Phi_1,\alpha}^{lux}}\right)d\mu (z)\\ &\le& K\int_{\mathbb{B}^n}\left(\int_{D(z,\delta)}\Phi_2\left(\frac{|f(z)|}{\|f\|_{\Phi_1,\alpha}^{lux}}\right)\frac{d\nu(w)}{(1-|w|^2)^{n+1}}\right)d\mu(z)\\ &=& K\int_{\mathbb{B}^n}\left(\int_{\mathbb{B}^n}\chi_{D(z,\delta)}(w)d\mu(z)\right)\Phi_2\left(\frac{|f(z)|}{\|f\|_{\Phi_1,\alpha}^{lux}}\right)\frac{d\nu(w)}{(1-|w|^2)^{n+1}}\\ &\le& K\int_{\mathbb{B}^n}\Phi_2\left(\frac{|f(w)|}{\|f\|_{\Phi_1,\alpha}^{lux}}\right)\frac{\mu(D(w,r))}{(1-|w|^2)^{n+1+\alpha}}d\nu_\alpha(w).
\Eeas
It follows from Lemma \ref{lem:holdergenecompl} and the hypothesis that
\Beas
M 
&\le& 2K\left(\int_{\mathbb{B}^n}\Phi_1\left(\frac{|f(w)|}{\|f\|_{\Phi_1,\alpha}^{lux}}\right)d\nu_\alpha(w)\right)\left(\int_{\mathbb{B}^n}\Phi_3\left(\frac{\mu(D(w,r))}{(1-|w|^2)^{n+1+\alpha}}\right)d\nu_\alpha(w)\right)\\ &\le& 2K\left(\int_{\mathbb{B}^n}\Phi_3\left(\frac{\mu(D(w,r))}{(1-|w|^2)^{n+1+\alpha}}\right)d\nu_\alpha(w)\right).
\Eeas
Hence (\ref{eq:Carlembed21}) holds with constant $C=2K\left(\int_{\mathbb{B}^n}\Phi_3\left(\frac{\mu(D(w,r))}{(1-|w|^2)^{n+1+\alpha}}\right)d\nu_\alpha(w)\right).$
\vskip .2cm
Proof of (b): Assume that (\ref{eq:Carlembed21}) holds for any $0\neq f\in A_\alpha^{\Phi_1}(\mathbb{B}^n)$. 
We recall with Proposition \ref{prop:atomicdecomp} that for any sequence $c=\{c_k\}_{k\in\mathbb{N}}$ of complex numbers that satisfies the condition \Be\label{eq:phiseq}\sum_k(1-|a_k|^2)^{n+1+\alpha}\Phi\left(\frac{|c_k|}{(1-|a_k|^2)^b}\right)<\infty,\Ee where $a=\{a_k\}$ is some $\delta$-lattice in $\mathbb{B}^n$, and $b>\frac{n+1+\alpha}{p_\Phi}$,
the series $\sum_{k=1}^\infty\frac{c_k}{(1-\langle z,a_k\rangle)^b}$ converges in
 $A_\alpha^{\Phi}(\mathbb{B}^n)$  to a function $f$. 
\vskip .1cm
For simplicity, we may assume that $f$ is such that $\|f\|_{\Phi_1,\alpha}^{lux}=1$. Thus 
$$\int_{\mathbb{B}^n}\Phi_2\left(\left|\sum_{k=1}^\infty\frac{c_k}{(1-\langle z,a_k\rangle)^b}\right|\right)d\mu(z)\le C.$$
Replacing $c_k$ by $c_kr_k(t)$, this gives us
\Be\label{eq:stepkinchine}\int_{\mathbb{B}^n}\Phi_2\left(\left|\sum_{k=1}^\infty\frac{c_kr_k(t)}{(1-\langle z,a_k\rangle)^b}\right|\right)d\mu(z)\le C.\Ee
By the extended Kinchine's inequalities, we have 
$$\Phi_2\left(A_{\Phi_2}\left(\sum_{k=1}^\infty\frac{|c_k|^2}{|1-\langle z,a_k\rangle|^{2b}}\right)^{1/2}\right)\le \int_0^1\Phi_2\left(\left|\sum_{k=1}^\infty\frac{c_kr_k(t)}{(1-\langle z,a_k\rangle)^b}\right|\right)dt.$$
We can also assume that $A_{\Phi_2}=1$. From the last inequality and (\ref{eq:stepkinchine}), we obtain
\Be\label{eq:normalizedineq}\int_{\mathbb{B}^n}\Phi_2\left(\left(\sum_{k=1}^\infty\frac{|c_k|^2}{|1-\langle z,a_k\rangle|^{2b}}\right)^{1/2}\right)d\mu(z)\le C.\Ee

Put $d_k=\frac{c_k}{(1-\langle z,a_k\rangle)^{b}}$ and observe that the sequence $\{\Phi_2(|d_k|)\}$ belongs to $\ell_{a,\alpha}^{\Phi_1\circ\Phi_2^{-1}}$ whenever $c=\{c_k\}$ satisfies (\ref{eq:phiseq}). Define $D_k=D(a_k,\delta)$. Then
\Bea\label{eq:innerseq} \nonumber\left\langle\{\Phi_2(|d_k|)\},\left\{\frac{\mu(D_k)}{(1-|a_k|^2)^{n+1+\alpha}}\right\}\right\rangle_\alpha &=& \sum_k\Phi_2(|d_k|)\mu(D_k)\\ &=& \int_{\mathbb{B}^n}\sum_k\Phi_2(|d_k|)\chi_{D_k}(z)d\mu(z).\Eea

We observe that if $\tilde{\Phi}_2(t)=\Phi_2(t^{1/2})$ is in $\mathscr{U}$, then
\Beas
\sum_k\Phi_2(|d_k|)\chi_{D_k}&\le& \tilde{\Phi}_2\left(\sum_k|d_k|^2\chi_{D_k}\right)\\ &\le& \Phi_2\left(\left(\sum_k|d_k|^2\chi_{D_k}\right)^{1/2}\right).
\Eeas
If $\tilde{\Phi}_2\in \mathscr{L}_s$, then as $\tilde{\Phi}_2(t^{1/s})$ is in $\mathscr{U}$, we obtain
\Beas
\sum_k\Phi_2(|d_k|)\chi_{D_k}&\le& \tilde{\Phi}_2\left(\left(\sum_k|d_k|^{2s}\chi_{D_k}\right)^{\frac 1s}\right)\\ &\le& \tilde{\Phi}_2\left[\left(\sum_k|d_k|^{2}\chi_{D_k}\right)\left(\sum_k\chi_{D_k}\right)^{\frac{1-s}{s}}\right]\\ &\le& \tilde{\Phi}_2\left(N^{\frac{1-s}{s}}\left(\sum_k|d_k|^{2}\chi_{D_k}\right)\right)\\ &\le& K{\Phi}_2\left(\left(\sum_k|d_k|^{2}\chi_{D_k}\right)^{\frac{1}{2}}\right).
\Eeas

Taking these observations in (\ref{eq:innerseq}) and using (\ref{eq:normalizedineq}), we obtain
\Beas
 {L} &:=& \langle \{\Phi_2(|d_k|)\},\left\{\frac{\mu(D_k)}{(1-|a_k|^2)^{n+1+\alpha}}\right\}\rangle_\alpha\\ &\le& K\int_{\mathbb{B}^n}{\Phi}_2\left(\left(\sum_k|d_k|^{2}\chi_{D_k}(z)\right)^{\frac{1}{2}}\right)d\mu(z)\\ &=& K\int_{\mathbb{B}^n}{\Phi}_2\left(\left(\sum_k\frac{|c_k|^{2}}{(1-|a_k|^2)^{2b}}\chi_{D_k}(z)\right)^{\frac{1}{2}}\right)d\mu(z)\\ &\le& K\int_{\mathbb{B}^n}{\Phi}_2\left(\left(\sum_k\frac{|c_k|^{2}}{(1-|a_k|^2)^{2b}}\frac{(1-|a_k|^2)^{2b}}{|1-\langle z,a_k\rangle^2|^{2b}}\right)^{\frac{1}{2}}\right)d\mu(z)\\ &=& K\int_{\mathbb{B}^n}{\Phi}_2\left(\left(\sum_k\frac{|c_k|^{2}}{|1-\langle z,a_k\rangle^2|^{2b}}\right)^{\frac{1}{2}}\right)d\mu(z)\\ &\le& KC.
\Eeas
As this holds for any the sequence $\{\Phi_2(|d_k|)\}$ belonging to $\ell_{a,\alpha}^{\Phi_1\circ\Phi_2^{-1}}$, we deduce that the sequence $\{\frac{\mu(D_k)}{(1-|a_k|^2)^{n+1+\alpha}}\}$ belongs to $\ell_{a,\alpha}^{\Phi_3}$. By Lemma \ref{lem:integraldiscretizationAverBer}, this is equivalent to saying that  the average function $\hat {\mu}_\delta$ belongs to $L^{\Phi_3}(\mathbb B^n, d\nu_\alpha)$.
The proof is complete.
\end{proof}


\begin{thebibliography}{99}

\bibitem{BBT}
\textsc{D. B\'ekoll\'e, A. Bonami, E. Tchoundja}, \emph{Atomic decomposition and weak factorization for Bergman-Orlicz spaces}, available at http://arxiv.org/abs/1805.0375v. 

%
%

\bibitem{Charpentier}
\textsc{S. Charpentier}, \emph{Composition operators on weighted Bergman-Orlicz spaces on the ball}, Compl. Anal. Oper. Theo. {\bf 7} (1) (2013), 43--68.
\bibitem{Charpentiersehba}
\textsc{S. Charpentier and B. F. Sehba}, \emph{Carleson measure theorems for large Hardy-Orlicz and Bergman–Orlicz spaces}, J. Funct. Spaces Appl. 2012 (2012). Article ID
792763, 21 p.


\bibitem{CW}
\textsc{J. A. Cima and W. Wogen}, \emph{A Carleson measure theorem for the Bergman space on the unit ball of $\mathbb {C}^n$}, { J. Oper. Theor.} {\bf 7} (1982), no. 1, 157--165.

\bibitem{Choe}
\textsc{Choe B. R., H. Koo and Y. J. Lee}, Positive Schatten class Toeplitz operators on the ball. Studia Math. {\bf 189} (2008), no. 1, 65-90.

\bibitem{DHZZ}
\textsc{Y. Deng, L. Huang, T. Zhao, D. Zheng}: Bergman projection and Bergman spaces, J. Oper. Theor. {\bf 46} (2001), 3-24.



\bibitem{hastings}
\textsc{W. Hastings}, \emph{A Carleson measure theorem for Bergman spaces}, { Proc. Amer. Math. Soc.}  {\bf 52} (1975), 237--241.


\bibitem{luecking}
\textsc{D. Luecking}, \emph{Multipliers of Bergman spaces into Lebesgue spaces} Proc. Edingburgh Math. Soc. (2) {\bf 29} (1986), 125--131.

\bibitem{luecking1}
\textsc{D. Luecking}, \emph{Embedding theorems for spaces of analytic functions via Khinchine's inequality},   Michigan Math. J.  {\bf 40} (2) (1993) 333--358.
%
\bibitem{luecking2}
\textsc{D. Luecking},  \emph{A technique for characterizing Carleson measures on Bergman spaces}, Proc. Amer. Math. Soc. {\bf 87} (1983) 656--660.

\bibitem{luecking3}
\textsc{D. Luecking},  \emph{Forward and reverse Carleson inequalities for functions in Bergman spaces and their derivatives}, Amer. J. Math. {\bf 107} (1) (1985) 85--111.

\bibitem{nanasehba}
\textsc{C. Nana, B. F. Sehba},  \emph{Carleson embeddings and two operators on Bergman spaces of tube domains over symmetric cones}. Integ. Equ. Oper. Theory {\bf 83} (2015), no. 2, 151-178.



\bibitem{raoren}
\textsc{M. M. Rao, Z. D. Ren,} \emph{Theory of Orlicz spaces.} Monographs and Textbooks
in Pure and Applied Mathematics, vol. 146, pp. xii+449, Marcel Dekker, Inc.,
New York, ISBN: 0-8247-8478-2 (1991).

\bibitem{sehba}
\textsc{B. F. Sehba}, \emph{$\Phi$-Carleson measures and multipliers between Bergman-Orlicz spaces of the unit ball of $\mathbb{C}^n$}. J. Aust. Math. Soc. {\bf 104} (2018), no. 1, 63-79.


\bibitem{sehbastevic}
\textsc{B. F. Sehba and S. Stevic}, \emph{On some product-type operators from Hardy-Orlicz and Bergman-Orlicz spaces to weighted-type spaces}, Appl. Math. Comput. \textbf{233} (2014), 565--581.

\bibitem{sehbatchoundja}
\textsc{B. F. Sehba and E. Tchoundja}, \emph{Hankel operators on holomorphic Hardy-Orlicz spaces}, Integr. Equ. Oper. Theory, Vol. {\bf 73}, No. 3 (2012), 331-–349.


\bibitem{TZ}
\textsc{E. Tchoundja, R. Zhao}, \emph{Weak factorization and Hankel forms for Bergman-Orlicz spaces on the unit ball}. Integ. Equ. Oper. Theory {\bf 91}, no. 2 (2009), Art. 16, 17 pp.









 
\bibitem{Zhao1} 
\textsc{R. Zhao}, \emph{Generalization of Schur's test and its application to a class of integral operators on the unit ball of $\mathbb{B}^n$}. Integ. Equ. Oper. Theory {\bf 82} (2015), no. 4, 519-532.
 

\bibitem{Zhu}
\textsc{K. Zhu}, \emph{Spaces of holomorphic function in the unit ball.} Graduate Texts in Mathematics, Vol. {\bf 26}. Springer, Berlin (2004).

\bibitem{Zhu2}
\textsc{Zhu K.}
\newblock{Positive Toeplitz operators on weighted Bergman spaces of bounded symmetric
domains},
\newblock {J. Operator Theory {\bf 20} (1988), 329-357}

\bibitem{Zhu3}
\textsc{Zhu K.}, Schatten class Toeplitz operators on the weighted Bergman spaces of the unit ball. New York J. Math. {\bf 13} (2007), 299-316.

\end{thebibliography}
\end{document}